\newcommand{\OG}{\mathsf{OG}}
\newcommand{\AbE[1]}{\mathsf{AE}(#1)}
\newcommand{\Comp}{\mathsf{Comp}}
\newcommand{\Tych}{\mathsf{Tych}}
\newcommand{\Id}{\mathrm{Id}}
\newcommand{\IR}{\mathbb R}
\newcommand{\Lip}{\mathsf{Lip}}
\newcommand{\win}{\underline{\mathrm{min}}}
\newcommand{\wax}{\underline{\mathrm{max}}}
\newcommand{\wa}{\pm}
\newcommand{\ord}{<}
\newcommand{\supp}{\mathrm{supp}}
\newcommand{\e}{\varepsilon}
\newcommand{\w}{\omega}
\newcommand{\pr}{\mathrm{pr}}
\newcommand{\h}{{\underline{\centerdot}}}
\newcommand{\id}{\mathrm{id}}
\newcommand{\word}{\sqsubset}
\newcommand{\sht}{\mathrm{ht}}
\newcommand{\A}{\mathcal A}
\newcommand{\U}{\mathcal U}
\newcommand{\IN}{\mathbb N}
\newcommand{\II}{\mathbb I}
\newcommand{\JJ}{\mathbb J}
\newcommand{\Ra}{\Rightarrow}
\newcommand{\conv}{\mathrm{conv}}
\newtheorem{theorem}{Theorem}[section]
\newtheorem{corollary}[theorem]{Corollary}
\newtheorem{problem}[theorem]{Problem}
\newtheorem{proposition}[theorem]{Proposition}
\theoremstyle{definition}
\newtheorem{definition}[theorem]{Definition}
\newtheorem{remark}[theorem]{Remark}
\title[$F$-Dugundji spaces, $F$-Milutin spaces and absolute $F$-valued retracts]{$F$-Dugundji spaces, $F$-Milutin spaces\\ and absolute $F$-valued retracts}
\author[T.~Banakh]{Taras Banakh}
\address[T.~Banakh]{Ivan Franko National University of Lviv (Ukraine) and Jan Kochanowski University in Kielce (Poland)}
\email{t.o.banakh@gmail.com}
\author[T.~Radul]{Taras Radul}
\address[T.~Radul]{Kazimierz Wielki University, Bydgoszcz (Poland) and Ivan Franko National University of Lviv (Ukraine)}
\email{tarasradul@yahoo.co.uk}
\dedicatory{Dedicated to the memory of V.V.~Fedorchuk}
\subjclass[2010]{54B30; 18B35; 54C20; 54C55}
\keywords{Dugundji space; Milutin space; absolute $F$-valued retract}
\begin{document}
\begin{abstract} For every functional functor $F:\Comp\to\Comp$ in the category $\Comp$ of compact Hausdorff spaces we define the notions of $F$-Dugundji and $F$-Milutin spaces, generalizing the classical notions of a Dugundji and Milutin spaces. We prove that the class of $F$-Dugundji spaces coincides with the class of absolute $F$-valued retracts. Next, we show that for a monomorphic continuous functor $F:\Comp\to\Comp$ admitting tensor products each Dugundji compact is an absolute $F$-valued retract if and only if the doubleton $\{0,1\}$ is an absolute $F$-valued retract if and only if some points $a\in F(\{0\})\subset F(\{0,1\})$ and $b\in F(\{1\})\subset F(\{0,1\})$ can be linked by a continuous path in $F(\{0,1\})$. We prove that for the functor $\Lip_k$ of $k$-Lipschitz functionals with $k<2$, each absolute $\Lip_k$-valued retract is openly generated. On the other hand the one-point compactification of any uncountable discrete space is not openly generated but is an absolute $\Lip_3$-valued retract. More generally, each hereditarily paracompact scattered compact space $X$ of finite scattered height $n=\sht(X)$ is an absolute $\Lip_k$-valued retract for $k=2^{n+2}-1$.
\end{abstract}

\maketitle

\section{Introduction}

A classical Tietze-Urysohn Theorem \cite[2.1.8]{Eng} says that each continuous function $f:X\to \mathbb R$ defined on a closed subset $X$ of a normal topological space $Y$ admits a continuous extension $\bar f:Y\to\mathbb R$. According to a classical theorem of Dugundji \cite{Dug}, for a closed subset $X$ of a metrizable topological space $Y$ and a locally convex linear topological space $Z$ there is a  linear operator $u:C(X,Z)\to C(Y,Z)$ extending each continuous function $f\in C(X,Z)$ to a function $\bar f\in C(Y,Z)$ with values in the closed convex hull $\overline{\mathrm{conv}}(f(X))$ of $f(X)$ in $Z$.
Operators with this property will be called {\em regular}.

Here by $C(X,Z)$ we denote the linear space of all continuous maps from $X$ to $Z$. The linear space $C(X,\mathbb R)$ of real-valued continuous functions on a topological space $X$ is usually is denoted by $C(X)$. If the space $X$ is compact, then the linear space $C(X)$ carries a structure of a Banach lattice with respect to the $\sup$-norm $\|f\|=\sup_{x\in X}|f(x)|$.

A natural temptation to unify Tietze-Urysohn and Dugundji Theorem fails as there are pairs $(X,A)$ of compact Hausdorff (and hence normal topological) spaces $A\subset X$ admitting no regular linear extension operator $u:C(A)\to C(X)$. This circumstance led A.~Pe\l czy\'nski \cite{Pel} to the idea of introducing the class of Dugundji compact spaces. Those are compact spaces $X$ admitting for each embedding $X\hookrightarrow Y$ into a compact Hausdorff space $Y$ a regular linear extension operator $u:C(X)\to C(Y)$.
\smallskip

The systematic study of the class of Dugundji compact spaces was started by A.~Pe\l czy\'nski in \cite{Pel}. Soon, it was realized that Dugundji compact spaces can be characterized as absolute $P$-valued retracts for the functor $P:\Comp\to\Comp$ of probability measures in the category $\Comp$ of compact Hausdorff spaces and their continuous maps. Let us recall that for a compact Hausdorff space $X$ its space of probability measures $PX$ is the subspace of the Tychonoff power $\IR^{C(X)}$ consisting of all regular linear functionals $\mu:C(X)\to\IR$ (the regularity of $\mu$ means that $\mu(f)\subset\overline{\conv}(f(X))$~). Each point $x\in X$ can be identified with the Dirac measure $\delta_x:C(X)\to\IR$, assigning to each function $f\in C(X)$ its value $f(x)$ at $x$. The assignment $x\mapsto\delta_x$ defines a canonical embedding $\delta:X\to PX$ of $X$ into its space of probability measures.
\smallskip

A compact Hausdorff space $X$ is called an {\em absolute $P$-valued retract} if  for each embedding $X\subset Y$ into a compact Hausdorff space $Y$ there is a continuous map $f:Y\to PX$ extending the canonical embedding $\delta:X\to PX$, see \cite{Fedmir} for more details.

A breakthrough in understanding the structure of Dugundji compacta was made by R.~Haydon \cite{Haydon} who proved that the class of Dugundji compacta coincides with the class $\AbE[0]$ of compact absolute extensors in dimension zero. We say that a topological space $X$ is an {\em absolute extensor in dimension} $n$ if each continuous map $f:B\to X$ defined on a closed subspace $B$ of a compact Hausdorff space $A$ of dimension $\dim(A)\le n$ admits a continuous extension $\bar f:A\to X$. By $\AbE[n]$ we shall denote the class of compact absolute extensors in dimension $n$.

\begin{theorem}[Haydon]\label{Dug} For a compact Hausdorff space $X$ the following conditions are equivalent:
\begin{enumerate}
\item $X$ is a Dugundji compact space;
\item $X$ is an absolute $P$-valued retract;
\item $X$ is an absolute extensor in dimension 0.
\end{enumerate}
\end{theorem}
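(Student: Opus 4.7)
The plan is to handle the equivalence $(1)\Leftrightarrow(2)$ via the Riesz representation theorem and then to establish the equivalence with $(3)$ separately, with the latter being considerably deeper. For $(1)\Leftrightarrow(2)$, given a regular linear extension operator $u\colon C(X)\to C(Y)$, the formula $f(y)(\varphi)=u(\varphi)(y)$ defines a continuous map $f\colon Y\to PX$ with $f|_X=\delta$; the regularity requirement $u(\varphi)(y)\in\overline{\conv}(\varphi(X))$ is precisely what forces $f(y)$ to be a probability measure on $X$, and the converse construction $u(\varphi)(y):=f(y)(\varphi)$ is symmetric.

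For $(1)\Rightarrow(3)$, suppose $g\colon B\to X$ is continuous, with $B$ a closed subspace of a zero-dimensional compact $A$. Form the pushout $Y=X\sqcup_g A$, a compact Hausdorff space containing $X$ as a closed subspace. By $(2)$, choose a continuous $r\colon Y\to PX$ extending $\delta\colon X\to PX$, and let $\bar g\colon A\to PX$ denote the precomposition of $r$ with the quotient map $A\to Y$; then $\bar g|_B=\delta\circ g$. The support-valued assignment $a\mapsto\supp(\bar g(a))$ is lower semicontinuous with compact values in $X$, and a zero-dimensional Michael-type selection theorem yields a continuous $\hat g\colon A\to X$ which, by construction, extends $g$.

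The direction $(3)\Rightarrow(1)$ is the deep content of the theorem and constitutes Haydon's principal contribution. The strategy is to represent an arbitrary compact Hausdorff superspace $Y\supset X$ as the limit of a continuous well-ordered inverse spectrum with $0$-soft (in particular, open) bonding maps, and at each stage to produce a zero-dimensional Milutin-type cover $\pi_\alpha\colon A_\alpha\twoheadrightarrow Y_\alpha$ carrying a continuous regular averaging operator $C(A_\alpha)\to C(Y_\alpha)$. At each successor step the $\mathsf{AE}(0)$ hypothesis on $X$ is used, together with the averaging operator, to lift the previously constructed partial extension to an $X$-valued extension over $A_\alpha$ which then descends via averaging to an extension into $PX$ over $Y_\alpha$; limit stages are handled by passing to the inverse limit. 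The main obstacle, and the key technical achievement, is the transfinite coherence of these stagewise constructions, ensuring that the lifts assemble into a globally defined regular linear extension operator $C(X)\to C(Y)$.
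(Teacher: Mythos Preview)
The paper does not give its own proof of this theorem; it is quoted as Haydon's result \cite{Haydon}, with the single remark that $(3)\Rightarrow(1)$ ``is usually proved with help of Milutin compact spaces, see \cite{FF}.'' So there is no in-paper argument to compare against, and I comment on your outline directly.

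Your $(1)\Leftrightarrow(2)$ is correct and is exactly the duality the paper later formalizes for general functional functors in Theorem~\ref{Rad}. Your description of $(3)\Rightarrow(1)$ as the substantial direction, handled by a transfinite inverse-spectrum construction combined with Milutin-type averaging, is accurate and consistent with the paper's remark.

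The gap is in $(1)\Rightarrow(3)$. Michael's zero-dimensional selection theorem requires the target to be completely metrizable, whereas $X$ here is an arbitrary compact Hausdorff space, so the theorem you invoke does not apply. The extra information that the multimap $a\mapsto\supp(\bar g(a))$ arises from a continuous map into $PX$ does not rescue the argument: even after passing to a Milutin cover $f\colon\{0,1\}^\kappa\to X$ one faces the same selection problem over a non-metrizable cube, and coordinatewise selection into $\{0,1\}$ need not land in the support (take the measure $\tfrac12\delta_{(0,0)}+\tfrac12\delta_{(1,1)}$ on $\{0,1\}^2$). In the standard treatments this implication also passes through Haydon's spectral description of Dugundji spaces as limits of $\sigma$-complete inverse systems of metrizable compacta with open bonding surjections: Michael's theorem is applied at each metrizable stage, and openness of the bonding maps lets one lift the stagewise selections coherently. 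In short, the spectral machinery you reserve for $(3)\Rightarrow(1)$ is also what drives $(1)\Rightarrow(3)$; the one-shot selection shortcut does not work as stated.
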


The implication $(3)\Ra(1)$ of this theorem is usually proved with help of Milutin compact spaces, see \cite{FF}. Let us recall \cite{FF} that a compact Hausdorff space $X$ is {\em Milutin} if there is a continuous surjective map $f:K\to X$ from a Cantor cube $K=\{0,1\}^\kappa$, admitting a regular averaging operator $u:C(K)\to C(X)$, i.e., a regular linear operator such that $u(\varphi\circ f)=\varphi$ for any $\varphi\in C(X)$. In \cite{M} Milutin proved that the unit interval $\II=[0,1]$ is Milutin and derived from this fact that each Dugundji compact space is Milutin. The converse is not true as shown by the example of the hyperspace $\exp_2(\{0,1\}^{\aleph_2})$ which is Milutin but not Dugundji, see \cite[6.7]{FF}.

Theorem~\ref{Dug} shows that Dugundji compact spaces are tightly connected with the functor of probability measures $P$ (this was observed and widely exploited by \v S\v cepin in \cite{SCH}). The relations of the class of Dugundji spaces to some other functors was studied by Alkinson and Valov \cite{AV}, \cite{V}.

In this paper for any functional functor $F:\Comp\to\Comp$ we define the notions of $F$-Dugundji and $F$-Mulutin  compact spaces and will characterize these spaces in terms of extension and averaging operators between the spaces of continuous functions thus generalizing Theorem~\ref{Dug} to other functors. In particular, we shall prove that the class of $F$-Dugundji compact spaces coincides with the class of absolute $F$-valued retracts. In Sections~\ref{s3} and \ref{s4} for certain (concrete functional) functors we shall study the class of absolute $F$-valued retracts and its relation to the classes of Dugundji compact spaces and of openly generated compacta.

\section{$F$-Dugundji and $F$-Milutin compact spaces}

All topological spaces considered in this paper are assumed to be Hausdorff. Undefined notions from the theory of functors in the category $\Comp$ can be found in the monograph \cite{TZ}.

For a compact space $X$ by $C(X)$ we denote the Banach lattice of all continuous functions endowed with the norm $\|\varphi\|=\sup_{x\in X}|\varphi(x)|$. Any (not necessarily continuous) function $\mu:C(X)\to\IR$ will be called a {\em functional} on $C(X)$. The space $\IR^{C(X)}$ of all functionals will be endowed with the Tychonoff product topology. For every $x\in X$ the Dirac measure $\delta_X(x)\in\IR^{C(X)}$ is the functional assigning to each function $\varphi\in C(X)$ its value $\varphi(x)$ at $x$.

Any continuous map $f:X\to Y$ between compact spaces induces a linear operator $f^*:C(Y)\to C(X)$, $f^*:\varphi\mapsto \varphi\circ f$, between the corresponding function spaces, called the {\em dual of} $f$. The {\em second dual operator} of $f$ is the function $f^{**}:\IR^{C(X)}\to \IR^{C(Y)}$ assigning to each functional $\mu\in\IR^{C(X)}$ the functional $f^{**}(\mu)\in\IR^{C(Y)}$, $f^{**}(\mu):\varphi\mapsto\mu(\varphi\circ f)$. Letting $\IR^{C(f)}:=f^{**}$, we can consider the construction $\IR^{C(\cdot)}:\Comp\to\Tych$ as a covariant functor from the category $\Comp$ to the category $\Tych$ of Tychonoff spaces and their continuous maps.
The functor $\IR^{C(\cdot)}$ contains the Dirac functor as a subfunctor. The {\em Dirac functor} $\delta:\Comp\to\Comp$ assigns to each compact space $X$ the closed subspace $\delta(X)=\{\delta_X(x):x\in X\}\subset \IR^{C(X)}$ consisting of the Dirac measures on $X$. It is clear that the Dirac functor $\delta$ is isomorphic to the identity functor.

The Tietze-Urysohn Theorem implies that for any injective continuous map $f:X\to Y$ between compacta the dual map $f^*:C(Y)\to C(X)$ is surjective and then the second dual map $f^{**}:\IR^{C(X)}\to \IR^{C(Y)}$ is a topological embedding. Consequently, for each closed subset $A\subset X$ and the identity embedding $i_A:A\to X$ we can identify the functional space $\IR^{C(A)}$ with the subspace $i_A^{**}(\IR^{C(A)})\subset \IR^{C(X)}$. The same convention will concern also other functors $F$: writing $a\in FA\subset FX$ we shall have in mind that $a\in Fi_A(FA)\subset FX$.

By a {\em functional functor} we shall understand any subfunctor $F:\Comp\to\Comp$ of the functor $\IR^{C(\cdot)}:\Comp\to\Tych$, containing the Dirac functor $\delta\subset F$.  Well-known examples of functional functors are the functor of probability measures $P$ \cite{Fedmir} and the functor of idempotent measures $I$ \cite{Zar}. Many known functors in the category $\Comp$ are isomorphic to functional functors, see \cite{Ra1}, \cite{Ra2}.

The notion of a regular operator $u:C(X)\to C(Y)$ appearing in the definitions of Dugundji and Milutin spaces is a partial case of the notion of an $F$-regular operator for a functional functor $F$. By an {\em operator} between function spaces $C(X)$ and $C(Y)$ we shall understand any (not necessarily linear or continuous) function $u:C(X)\to C(Y)$.
Each operator $u:C(X)\to C(Y)$ induces the dual operator $u^*:\IR^{C(X)}\to\IR^{C(Y)}$ assigning to each functional $\mu:C(Y)\to\IR$ the functional $u^*(\mu)\in\IR^{C(X)}$, $u^*(\mu):\varphi\mapsto \mu\circ u(\varphi)$. It is easy to check that the dual operator $u^*$ is continuous with respect to the Tychonoff product topologies on the functional spaces $\IR^{C(X)}$ and $\IR^{C(Y)}$.

\begin{definition} For a functional functor $F:\Comp\to\Comp$ and compact spaces $X,Y$, an operator $u:C(X)\to C(Y)$ is called {\em $F$-regular} if for any $y\in Y$ the functional $u^*(\delta_Y(y))$ belongs to $FX\subset \IR^{C(X)}$.
\end{definition}

Observe that an  operator $u:C(X)\to C(Y)$ is regular and linear if and only if it is $P$-regular for the functor of probability measures $P$.

Next we generalize the notions of  Dugundji and Milutin compact spaces introducing a functorial parameter is their definitions.

\begin{definition}Let $F:\Comp\to\Comp$ be a functional functor. A compact space $X$ is defined to be
\begin{itemize}
\item {\em $F$-Milutin} if there exists a surjective map $f:K\to X$ from a Cantor cube $K=\{0,1\}^\kappa$ admitting an $F$-regular averaging operator $u:C(K)\to C(X)$;
\item {\em $F$-Dugundji} if there exists an injective map $f:X\to K$ to a Tychonoff cube $K=[0,1]^\kappa$ admitting an $F$-regular extension operator $u:C(X)\to C(K)$.
\end{itemize}
\end{definition}

The notions of an extension and averaging operators is unified by the notion of an exave  operator introduced by Pe\l czy\'nski \cite{Pel}.

An operator $u:C(X)\to C(Y)$ is called an {\em $f$-exave} for $f$ if $f^*\circ u\circ f^*=f^*$. If $f$ is injective (resp. surjective), then the equality $f^*\circ u\circ f^*=f^*$ is equivalent to $f^*\circ u=\id_{C(X)}$ (resp. $u\circ f^*=\id_{C(Y)}$), in which case $u$ is called an {\em extension} (resp. {\em averaging}) operator for $f$.

\begin{theorem}\label{Rad} For a functional functor $F:\Comp\to\Comp$ and a map $f:X\to Y$ between compact Hausdorff spaces $X,Y$ the following conditions are equivalent:
\begin{enumerate}
\item there exists an $F$-regular exave operator $u:C(X)\to C(Y)$ for the map $f$;
\item there exists a continuous map $s:Y\to FX$ such that $Ff\circ s(y)=\delta_Y(y)$ for every $y\in f(X)\subset Y$.
\end{enumerate}
\end{theorem}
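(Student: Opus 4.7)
The plan is to exploit the elementary duality between operators $u:C(X)\to C(Y)$ and continuous maps $s:Y\to\IR^{C(X)}$: to any $u$ one assigns the map $s_u(y):=u^*(\delta_Y(y))$, and to any continuous $s$ one assigns the operator $u_s(\varphi)(y):=s(y)(\varphi)$ (continuity of $u_s(\varphi)$ on $Y$ follows from continuity of $s$ into the Tychonoff product). These two correspondences are mutually inverse, and the $F$-regularity of $u$ translates tautologically to the condition $s_u(Y)\subset FX$. So the real content of the theorem is that the exave identity $f^*\circ u\circ f^*=f^*$ corresponds, under this duality, to the lifting identity $Ff\circ s(y)=\delta_Y(y)$ for $y\in f(X)$.

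For the implication $(1)\Ra(2)$, I would set $s(y):=u^*(\delta_Y(y))$. Continuity of $s:Y\to\IR^{C(X)}$ is immediate from continuity of $u^*$ on the Tychonoff product and from continuity of the Dirac embedding $\delta_Y$, and $F$-regularity of $u$ gives $s(Y)\subset FX$. To verify the lifting condition, I would evaluate $Ff\circ s(f(x))$ at an arbitrary $\varphi\in C(Y)$: using that $Ff$ is the restriction of $f^{**}$ to $FX$, this unwinds to $u(\varphi\circ f)(f(x))$, while $\delta_Y(f(x))(\varphi)=\varphi(f(x))$. Their equality for all $\varphi$ and $x$ is precisely the pointwise form of $f^*\circ u\circ f^*=f^*$.

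For the converse $(2)\Ra(1)$, I would define $u(\varphi)(y):=s(y)(\varphi)$. Then by construction $u^*(\delta_Y(y))=s(y)\in FX$, so $u$ is $F$-regular. Running the same computation in reverse, for $y=f(x)$ the hypothesis $Ff(s(f(x)))=\delta_Y(f(x))$ gives $u(\varphi\circ f)(f(x))=\varphi(f(x))$ for every $\varphi\in C(Y)$, which reassembles into the exave identity $f^*\circ u\circ f^*=f^*$.

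There is no substantive obstacle here; the only thing that needs care is keeping the various dual arrows $f^*$, $f^{**}=Ff$, $u^*$ straight and remembering the identification $FA\subset FX$ made earlier in the excerpt, so that the lifting condition $Ff\circ s=\delta_Y|_{f(X)}$ can be read inside $\IR^{C(Y)}$. Once the bookkeeping is set up, each direction collapses to a single line of pointwise evaluation.
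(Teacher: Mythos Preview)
Your proposal is correct and follows essentially the same route as the paper: both directions pass through the duality $s(y)=u^*(\delta_Y(y))$, $u(\varphi)(y)=s(y)(\varphi)$, identify $F$-regularity with $s(Y)\subset FX$, and reduce the exave identity to the pointwise equality $u(\varphi\circ f)(f(x))=\varphi(f(x))$. The only cosmetic difference is that in $(1)\Ra(2)$ the paper dualizes the exave identity to $f^{**}\circ u^*\circ f^{**}=f^{**}$ and applies it to $\delta_X(x)$, whereas you unwind $Ff\circ s(f(x))(\varphi)$ directly; the computations are the same.
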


\begin{proof} To prove that $(1)\Ra(2)$, assume that $u:C(X)\to C(Y)$ is an $F$-regular exave operator $u:C(X)\to C(Y)$ for the map $f$. Then its dual operator $u^*:\IR^{C(Y)}\to \IR^{C(X)}$ is a continuous map such that $u^*\circ\delta_Y(Y)\subset FX$. Consider the map $s=u^*\circ\delta_Y:Y\to FX$ and take any point $y\in f(X)$. Choose any point $x\in f^{-1}(y)$. Since $u$ is an exave, $f^*\circ u\circ f^*=f^*$, which implies $f^{**}\circ u^*\circ f^{**}=f^{**}$ where $f^{**}:\IR^{C(X)}\to \IR^{C(X)}$ is the dual operator to $f^*:C(Y)\to C(X)$. Taking into account that $F$ is a subfunctor of the functor $\IR^{C(\cdot)}$, we conclude that $Ff=f^{**}|FX$. It is easy to check that $f^{**}(\delta_X(x))=\delta_Y(f(x))=\delta_Y(y)$, which implies that $$\delta_Y(y)=f^{**}(\delta_X(x))=f^{**}\circ u^*\circ f^{**}(\delta_X(x))=f^{**}\circ u^*(\delta_Y(y))=Ff\circ u^*(\delta_Y(y))=Ff\circ s(y).$$

To prove that $(2)\Ra(1)$, assume that $s:Y\to FX$ is a continuous map such that $Ff\circ s(y)=\delta_Y(y)$ for every $y\in f(X)\subset Y$. Define an operator $u:C(X)\to C(Y)$ assigning to each continuous function $\varphi\in C(X)$ the continuous function $u(\varphi):Y\to \IR$, $u(\varphi):y\mapsto s(y)(\varphi)$. The continuity of $u(\varphi)$ follows from the continuity of the function $s:Y\to FX\subset\IR^{C(X)}$ and the continuity of the evaluation operator
$\delta_\varphi:\IR^{C(X)}\to \IR$, $\delta_\varphi:\mu\mapsto\mu(\varphi)$.

Let us check that the operator $u:C(X)\to C(Y)$ is  $F$-regular. Consider the dual operator $u^*:\IR^{C(Y)}\to\IR^{C(X)}$ and fix any point $y\in Y$. Observe that the functional $u^*(\delta_Y(y))\in\IR^{C(X)}$ assigns to each function $\varphi\in C(X)$ the real number $\delta_Y(y)(u(\varphi))=u(\varphi)(y)=s(y)(\varphi)$, which implies that $u^*\circ\delta_Y(y)=s(y)\in FX$. This means that the operator $u$ is $F$-regular.

Finally, we check that $u$ is an exave for the map $f$, i.e., $f^*\circ u\circ f^*=f^*$ where $f^*:C(Y)\to C(X)$ is the dual operator induced by $f$. Given any function $\varphi\in C(Y)$ and any $x\in X$, we need to check that $f^*\circ u\circ f^*(\varphi)(x)=f^*(\varphi)(x)$.
For this let $y=f(x)$ and observe that
$$f^*\circ u\circ f^*(\varphi)(x)=u(\varphi\circ f)(f(x))=s(f(x))(\varphi\circ f)=Ff(s(f(x)))(\varphi)=\delta_Y(f(x))(\varphi)=\varphi(f(x))=f^*(\varphi)(x).$$ Hence $u$ is an exave for $f$ and the theorem is proved.
\end{proof}

Theorem~\ref{Rad} implies the following characterizations of $F$-Milutin and $F$-Dugundji compact spaces.

\begin{theorem}\label{cor2a} For a functional functor $F:\Comp\to\Comp$ and a compact Hausdorff space $X$ the following conditions are equivalent:
\begin{enumerate}
\item $X$ is $F$-Milutin;
\item there exist a surjective map $f:K\to X$ from a Cantor cube $K=[0,1]^\kappa$  and a continuous
map $s:X\to FK$ such that $Ff\circ s(x)=\delta_X(x)$ for every $x\in X$.
\end{enumerate}
\end{theorem}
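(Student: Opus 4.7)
The plan is to deduce Theorem~\ref{cor2a} almost directly from Theorem~\ref{Rad}, after observing the simple fact that for a \emph{surjective} map $f:K\to X$ the notions of $F$-regular averaging operator and $F$-regular exave operator for $f$ coincide. Indeed, surjectivity of $f$ makes the dual $f^*:C(X)\to C(K)$ injective, so the exave identity $f^*\circ u\circ f^*=f^*$ may be cancelled on the left to give the averaging identity $u\circ f^*=\id_{C(X)}$; the converse implication is trivial. Thus $F$-Milutin may equivalently be described as: there exists a surjection $f:K\to X$ from a Cantor cube admitting an $F$-regular exave operator $u:C(K)\to C(X)$.

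For the direction $(1)\Ra(2)$, I would start with a witnessing surjection $f:K\to X$ and $F$-regular averaging operator $u:C(K)\to C(X)$, reinterpret $u$ as an $F$-regular exave for $f$ by the preceding observation, and then apply Theorem~\ref{Rad} with the roles of $X,Y$ there played by $K,X$. This yields a continuous $s:X\to FK$ with $Ff\circ s(x)=\delta_X(x)$ for every $x\in f(K)$; surjectivity of $f$ upgrades ``for every $x\in f(K)$'' to ``for every $x\in X$,'' which is condition~(2). For the converse $(2)\Ra(1)$, I would take the given surjection $f:K\to X$ and the map $s:X\to FK$ with $Ff\circ s=\delta_X$ on all of $X$, invoke Theorem~\ref{Rad} in the opposite direction to produce an $F$-regular exave $u:C(K)\to C(X)$ for $f$, and then use the initial observation once more to conclude that $u$ is in fact an $F$-regular averaging operator. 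Hence $X$ is $F$-Milutin.

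I do not expect any real obstacle: the substantive content has already been packaged in Theorem~\ref{Rad}, and the remaining work is the one-line observation about surjectivity turning an exave into an averaging operator. The only thing worth flagging is the apparent typo in the statement (a Cantor cube is $\{0,1\}^\kappa$, not $[0,1]^\kappa$), which does not affect the argument.
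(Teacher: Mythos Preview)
Your proposal is correct and matches the paper's approach: the paper does not give a separate proof of Theorem~\ref{cor2a} but simply states that it follows from Theorem~\ref{Rad}, and your argument spells out exactly how (via the observation that for a surjective $f$ the dual $f^*$ is injective, so the exave identity reduces to the averaging identity). Your remark about the typo $[0,1]^\kappa$ versus $\{0,1\}^\kappa$ is also apt.
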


\begin{theorem}\label{cor1a} For a functional functor $F:\Comp\to\Comp$ and a compact space $X$ the following conditions are equivalent:
\begin{enumerate}
\item $X$ is $F$-Dugundji;
\item every embedding $X\subset Y$ into a compact Hausdorff space $Y$ admits an $F$-regular extension operator $u:C(X)\to C(Y)$;
\item for every embedding $X\subset Y$ into a compact Hausdorff space $Y$ there exists a continuous map $s:Y\to FX$ such that $s(x)=\delta_Y(x)$ for all $x\in X\subset Y$;
\item for some  embedding $X\subset K$ into a Tychonoff cube $K=[0,1]^\kappa$ there exists a continuous map $s:K\to FX$ such that $s(x)=\delta_K(x)$ for all $x\in X\subset K$.
\end{enumerate}
\end{theorem}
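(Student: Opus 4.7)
The plan is to establish the cycle $(1)\Ra(4)\Ra(3)\Ra(2)\Ra(1)$, using Theorem~\ref{Rad} to pass between $F$-regular exave operators and continuous sections into $FX$, and using Tychonoff cubes as ``injective objects'' in $\Comp$ (via the Tietze-Urysohn theorem) to transfer a section built over one cube-embedding to a section over every embedding.

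The equivalences $(1)\Leftrightarrow(4)$ and $(2)\Leftrightarrow(3)$ are immediate applications of Theorem~\ref{Rad}: for $(1)\Leftrightarrow(4)$ apply it to the injective map $f:X\to K$ furnished by the definition of $F$-Dugundji (identifying $X$ with $f(X)\subset K$ turns the condition $Ff\circ s(y)=\delta_K(y)$ for $y\in f(X)$ into the requirement in~(4)); for $(2)\Leftrightarrow(3)$ apply it to each embedding $X\hookrightarrow Y$. The implication $(2)\Ra(1)$ is then trivial, since every compact Hausdorff space embeds into some Tychonoff cube $[0,1]^\kappa$.

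The main content is $(4)\Ra(3)$. Fix an embedding $X\subset K=[0,1]^\kappa$ together with a continuous $s_0:K\to FX$ witnessing~(4), and let $X\subset Y$ be an arbitrary embedding into a compactum. Since $X$ is closed in the normal space $Y$, the Tietze-Urysohn theorem extends each coordinate restriction $\pr_\alpha|X:X\to[0,1]$ to a continuous function on $Y$; assembling these extensions produces a continuous map $r:Y\to K$ with $r|X=\id_X$. Setting $s:=s_0\circ r:Y\to FX$, one has $s(x)=s_0(x)$ for every $x\in X$, and it remains to match this with $\delta_Y(x)$ under the identification $FX\subset FY$.

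The step I expect to require the most care is not the construction of $r$ but the bookkeeping around the two identifications $FX\subset FK$ (coming from $i:X\to K$) and $FX\subset FY$ (coming from $j:X\to Y$). Here $Fi$ is a topological embedding (it is the second dual of the surjection $i^*:C(K)\to C(X)$) and satisfies $Fi\circ\delta_X=\delta_K\circ i$ by naturality of the Dirac transformation; consequently the hypothesis $s_0(x)=\delta_K(x)$ in $FK$ is equivalent to $s_0(x)=\delta_X(x)$ in $FX$. The same naturality applied to $j$ then yields $s(x)=\delta_X(x)=\delta_Y(x)$ in $FY$, completing~(3).
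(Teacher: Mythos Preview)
Your proposal is correct and follows essentially the same route as the paper: both derive $(1)\Leftrightarrow(4)$ and $(2)\Leftrightarrow(3)$ directly from Theorem~\ref{Rad}, and both prove the key implication $(4)\Ra(3)$ by extending the identity $X\to X\subset K$ to a map $Y\to K$ via Tietze--Urysohn and composing with the given section. Your added paragraph on the identifications $FX\subset FK$ versus $FX\subset FY$ (and why $\delta_K(x)$ and $\delta_Y(x)$ name the same element of $FX$) makes explicit a point the paper leaves implicit.
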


\begin{proof} The equivalences $(2)\Leftrightarrow (3)$ and $(1)\Leftrightarrow (4)$ follow from Theorem~\ref{Rad} while $(3)\Ra(4)$ is trivial. To prove that $(4)\Ra(3)$, assume that
  for some  embedding $X\subset K$ into a Tychonoff cube $K=[0,1]^\kappa$ there exists a continuous map $s:K\to FX$ such that $s(x)=\delta_K(x)$ for all $x\in X\subset K$. Let $X\subset Y$ be any embedding into a compact space $Y$. By Tietze-Urysohn Theorem, the identity map $X\to X\subset K$ admits a continuous extension $f:Y\to K$. Then the map $s\circ f:Y\to FX$ has the required property: $s\circ f(x)=s(x)=\delta_Y(x)$ for every $x\in X$.
\end{proof}

\section{Absolute $F$-valued retracts and absolute $F$-Milutin spaces}\label{s3}

Observe that the last conditions in Theorems~\ref{cor1a}, \ref{cor2a} have sense for any (not necessarily functional) functor $F$. So, we have chosen these conditions as a base for the definitions of absolute $F$-valued retracts and absolute $F$-Milutin spaces. In the following definition, given a functor $F:\Comp\to\Comp$ and a closed subset $A\subset X$ of a compact Hausdorff space $X$ we write $a\in FA\subset FX$ for some element $a\in FX$ if $a\in Fi_A^X(FA)\subset FX$ where $i_A^X:A\to X$ is the identity embedding.

\begin{definition} Given a functor $F:\Comp\to\Comp$ we define a compact Hausdorff space $X$ to be
\begin{itemize}
\item an {\em absolute $F$-valued retract\/} if for any embedding $X\subset Y$ into a compact space $Y$ there is a map $r:Y\to FX$ such that $r(x)\in F(\{x\})\subset FX$ for every point $x\in X$;
\item an {\em absolute $F$-Milutin space} if  there are a surjective map $f:K\to X$ from a Cantor cube $K=\{0,1\}^\kappa$ and a map $s:X\to FK$ such that $s(x)\in F(f^{-1}(x))\subset FK$ for all $x\in X$.
\end{itemize}
\end{definition}

We shall say that a functor $F:\Comp\to\Comp$ {\em preserves preimages} ({\em over points}) if for any surjective map $f:X\to Y$ between compact spaces and any closed (one-point) set $A\subset Y$ we get $(Ff)^{-1}(FA)=F(f^{-1}(A))$ (which actually means that  $(Ff)^{-1}(Fi_A^Y(FA))=Fi_{f^{-1}(A)}^XF(f^{-1}(A))\subset FX$).

Theorems \ref{cor1a}, \ref{cor2a} imply the following corollary.

\begin{corollary}\label{c3.2} Let $F:\Comp\to\Comp$ be a functional functor preserving singletons. A compact space $X$ is
\begin{enumerate}
\item $F$-Dugundji if and only if $X$ is an absolute $F$-valued retract;
\item $F$-Milutin if $X$ is absolute $F$-Milutin.
\end{enumerate}
Moreover, if the functional functor $F$ preserves preimages over points, then a compact space $X$ is $F$-Milutin if and only if $X$ is absolute $F$-Milutin.
\end{corollary}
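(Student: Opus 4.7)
The plan is to leverage Theorems~\ref{cor1a} and~\ref{cor2a}, which reformulate the $F$-Dugundji and $F$-Milutin conditions in terms of continuous maps $s$ (into $FX$, respectively $FK$) satisfying the pointwise \emph{equalities} $s(x)=\delta_X(x)$ or $Ff\circ s(x)=\delta_X(x)$, and to match these with the pointwise \emph{membership} conditions $r(x)\in F(\{x\})$ and $s(x)\in F(f^{-1}(x))$ appearing in the definitions of absolute $F$-valued retract and absolute $F$-Milutin space. The bridge between the two formulations is the observation that since $\delta$ is a subfunctor of $F$, naturality places $\delta_X(x)$ inside $F(\{x\})\subset FX$, and if $F$ preserves singletons then $F(\{x\})$ is a one-point space, so in fact $F(\{x\})=\{\delta_X(x)\}$.

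For assertion (1) this observation immediately equates the condition $r(x)\in F(\{x\})$ with $r(x)=\delta_X(x)$, which after the canonical identification $FX\subset FY$ is exactly the condition of Theorem~\ref{cor1a}(3); hence the two classes coincide. For the implication ``absolute $F$-Milutin $\Ra$ $F$-Milutin'' in assertion (2), given $f:K\to X$ and $s:X\to FK$ with $s(x)\in F(f^{-1}(x))$, I would apply $F$ to the factorization of $f|_{f^{-1}(x)}:f^{-1}(x)\to X$ through the singleton $\{x\}$, obtaining that $Ff$ sends $F(f^{-1}(x))\subset FK$ into $F(\{x\})=\{\delta_X(x)\}\subset FX$; thus $Ff\circ s(x)=\delta_X(x)$ and Theorem~\ref{cor2a} yields the $F$-Milutin property. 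This direction uses only singleton-preservation, not the preimage hypothesis.

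For the converse of assertion (2), under the extra hypothesis that $F$ preserves preimages over points, Theorem~\ref{cor2a} first produces a surjection $f:K\to X$ and $s:X\to FK$ with $Ff\circ s(x)=\delta_X(x)\in F(\{x\})$; the preimage-over-points hypothesis applied to $\{x\}\subset X$ then gives $s(x)\in (Ff)^{-1}(F(\{x\}))=F(f^{-1}(x))$, which is exactly absolute $F$-Milutinness. The only delicate aspect is purely bookkeeping: every statement of the form ``$y\in FA$'' really means ``$y=F(i_A^X)(a)$ for some $a\in FA$,'' so I would have to keep the chain of identifications $F(\{x\})\subset F(f^{-1}(x))\subset FK$ and $F(\{x\})\subset FX$ straight throughout the naturality manipulations.
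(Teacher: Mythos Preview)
Your proposal is correct and follows exactly the route the paper intends: the paper merely states that the corollary follows from Theorems~\ref{cor1a} and~\ref{cor2a}, and you have unpacked precisely the needed details --- namely that singleton-preservation together with $\delta\subset F$ forces $F(\{x\})=\{\delta_X(x)\}$, which converts the membership conditions in the definitions of absolute $F$-valued retract and absolute $F$-Milutin into the equality conditions of those theorems, while the preimage-over-points hypothesis supplies the reverse inclusion $(Ff)^{-1}(F(\{x\}))\subset F(f^{-1}(x))$ needed for the converse in~(2).
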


\begin{remark}
In \cite{AV}, \cite{V} partial cases of Corollary~\ref{c3.2} were proved for some concrete functional functors $F$.
\end{remark}

For a functor $F:\Comp\to\Comp$ by $\AR[F]$ we shall denote the class of all compact absolute $F$-valued retracts.
In the remaining part of the paper we shall address the following problem motivated by Theorem~\ref{Dug} and Corollary~\ref{c3.2}.

\begin{problem} Given a functor $F:\Comp\to\Comp$, detect compact spaces that belong to the class $\AR[F]$.
\end{problem}

This problem is not new and has been considered in \cite{SCH}, \cite{Cat98}, \cite{BBY}, \cite{V}.
An information about the classes $\AR[F]$ can be helpful because of the following simple fact.

\begin{proposition}\label{natural} Let $F,F':\Comp\to\Comp$ be two functors. If $F$ admits a natural transformation into $F'$, then $\AR[F]\subset\AR[F']$.
\end{proposition}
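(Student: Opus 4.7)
The plan is to take a witness of $X\in\AR[F]$ and push it forward along the given natural transformation $\eta:F\to F'$. Fix an embedding $X\subset Y$ into a compact space $Y$. Since $X\in\AR[F]$, there is a continuous map $r:Y\to FX$ with $r(x)\in F(\{x\})\subset FX$ for every $x\in X$. The natural candidate witness for $X\in\AR[F']$ is the composition $r':=\eta_X\circ r:Y\to F'X$, where $\eta_X:FX\to F'X$ is the component of $\eta$ at $X$; it is continuous as a composition of morphisms in $\Comp$.

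The verification reduces to a single application of naturality. Fix $x\in X$ and let $i:\{x\}\to X$ denote the one-point inclusion. By the convention governing the notation $F(\{x\})\subset FX$, I can write $r(x)=Fi(a)$ for some $a\in F\{x\}$. Naturality of $\eta$ at the morphism $i$ yields the commuting square $\eta_X\circ Fi=F'i\circ\eta_{\{x\}}$, whence
$$r'(x)=\eta_X\bigl(Fi(a)\bigr)=F'i\bigl(\eta_{\{x\}}(a)\bigr)\in F'i(F'\{x\})=F'(\{x\})\subset F'X,$$
which is exactly the defining condition for $X\in\AR[F']$.

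There is essentially no serious obstacle here: the argument is purely formal and relies only on the fact that the components of $\eta$ are continuous maps in $\Comp$ intertwining the actions of $F$ and $F'$ on the inclusion $\{x\}\hookrightarrow X$. The only care required is to keep straight the identification $FA\subset FX$ via $Fi_A^X$ (and the analogous one for $F'$), so that both sides of the naturality square refer to the same embedded copies of $F(\{x\})$ and $F'(\{x\})$.
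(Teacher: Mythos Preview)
Your proof is correct and is exactly the natural argument; the paper in fact gives no proof at all, simply calling the proposition a ``simple fact,'' so your write-up supplies the omitted verification.
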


For normal functors $F$ the upper bound on the classes $\AR[F]$ was found by \v S\v cepin \cite{SCH}.

\begin{theorem}[\v S\v cepin]\label{shchep} $\AR[F]\subset\AbE[0]=\AR[P]$ for any normal functor $F:\Comp\to\Comp$.
\end{theorem}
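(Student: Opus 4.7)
The statement splits into the equality $\AbE[0]=\AR[P]$ and the inclusion $\AR[F]\subset\AbE[0]$.

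For the equality, apply Corollary~\ref{c3.2}(1) to the probability measure functor $P$, which is functional and preserves singletons (since $P(\{x\})=\{\delta_x\}$), obtaining $\AR[P]=\{P\text{-Dugundji compacta}\}$; by Haydon's Theorem~\ref{Dug} this coincides with $\AbE[0]$.

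For the inclusion $\AR[F]\subset\AbE[0]$, fix $X\in\AR[F]$ with $F$ normal. Normality of $F$ implies that $F$ preserves the one-point space, so each $F(\{x\})\subset FX$ is a singleton, and the rule $x\mapsto F(\{x\})$ defines a topological embedding $\eta_X:X\to FX$. The definition of absolute $F$-valued retract gives, for any embedding $X\subset Y$, a continuous retraction $r:Y\to FX$ with $r|_X=\eta_X$. Fix once and for all such an embedding $X\subset K:=[0,1]^\kappa$ into a Tychonoff cube together with a corresponding retraction $r:K\to FX$.

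To verify $X\in\AbE[0]$, take any zero-dimensional compactum $A$, a closed $B\subset A$, and a continuous $f:B\to X$. By Tietze--Urysohn (applied coordinatewise in $K$) extend $f:B\to X\subset K$ to $\tilde f:A\to K$, and put $g:=r\circ\tilde f:A\to FX$. Then $g|_B=\eta_X\circ f$, and it remains to convert $g$ into a continuous $\bar f:A\to X$ with $\bar f|_B=f$.

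The conversion uses the support map. For a normal functor $F$, every $a\in FX$ admits a nonempty compact support $\supp(a)\subset X$ (the smallest closed $S\subset X$ with $a\in F(S)\subset FX$; existence and nonemptiness follow from $F$ preserving intersections and the empty set), and the multivalued map $\supp:FX\to\exp X$ is upper semicontinuous. The composition $\Phi:=\supp\circ g:A\to\exp X$ is then usc with compact values, and $\Phi(b)=\{f(b)\}$ for $b\in B$ since $\supp(\eta_X(x))=\{x\}$. Any continuous selection $\bar f:A\to X$ of $\Phi$ with $\bar f|_B=f$ will solve the problem.

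The main obstacle will be producing this selection, as Michael's classical selection theorem concerns lsc rather than usc multimaps. The workaround exploits the zero-dimensionality of $A$: by upper semicontinuity of $\Phi$ and compactness of its values, every open cover of $X$ pulls back through $\Phi$ to yield a finite clopen partition of $A$ on whose cells $\Phi$ takes values inside a single cover element. Iterating on a cofinal system of covers of $X$ (obtained via the uniformity $X$ inherits from $K$), one constructs a uniformly Cauchy net of step selections which agree with $f$ on $B$ at every stage; its limit is the desired continuous $\bar f$.
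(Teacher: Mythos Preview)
The paper does not actually prove this theorem: it is stated with attribution to \v S\v cepin and cited from \cite{SCH}, so there is no ``paper's own proof'' to compare against. Your handling of the equality $\AbE[0]=\AR[P]$ is fine and is essentially already contained in Haydon's Theorem~\ref{Dug}.

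Your argument for the inclusion $\AR[F]\subset\AbE[0]$, however, has a genuine gap in the final selection step. The claim that ``every open cover of $X$ pulls back through $\Phi$ to yield a finite clopen partition of $A$ on whose cells $\Phi$ takes values inside a single cover element'' is false. Upper semicontinuity of $\Phi=\supp\circ g$ only guarantees that the sets $\{a:\Phi(a)\subset U_i\}$ are open; it does \emph{not} guarantee that they cover $A$, because $\Phi(a)$ may fail to lie inside any single $U_i$. Concretely, take the normal functor $F=P$, the space $X=\{0,1\}$, the cube $K=[0,1]$, and the retraction $r:[0,1]\to P(\{0,1\})$ given by $r(t)=(1-t)\delta_0+t\delta_1$. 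Then $\supp(r(t))=\{0,1\}$ for every $t\in(0,1)$, so for any $\tilde f:A\to[0,1]$ hitting the interior, the multimap $\Phi$ has the constant value $\{0,1\}$ there, and for the cover $\{\{0\},\{1\}\}$ of $X$ no cell of any partition of $A$ can have its $\Phi$-image inside a single element. The usc support map simply carries no information in this situation, so the approach ``extend into the cube, push to $FX$, take supports, select'' cannot succeed without substantial additional input. Note also that even granting lower semicontinuity, zero-dimensional selection theorems require a completely metrizable target, whereas here $X$ is an arbitrary compactum --- and the metrizable case is trivial since every metrizable compactum is already Dugundji.

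\v S\v cepin's actual proof in \cite{SCH} proceeds by spectral methods: one represents $X$ as the limit of a $\sigma$-complete inverse system of metrizable compacta, applies the normal functor $F$ (which, being continuous, commutes with the limit and, preserving weight, keeps the factors metrizable), and uses the $F$-valued retraction together with preservation of preimages to show that a cofinal set of bonding maps of the original spectrum are $0$-soft, which is one of \v S\v cepin's characterizations of $\AbE[0]$. The argument is global and structural rather than a pointwise selection.
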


Next, we find a condition on a functor $F$ guaranteeing that $\AbE[0]\subset\AR[F]$.

\begin{proposition}\label{p3.7} Let $F$ be a functor. If each Tychonoff cube is a absolute $F$-Mulutin space, then each Dugundji compact space is an absolute $F$-Milutin absolute $F$-valued retract. Consequently, $\AbE[0]\subset\AR[F]$.
\end{proposition}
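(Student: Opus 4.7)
The plan is to exploit Haydon's characterisation $X\in\AbE[0]$ of Dugundji compacta (Theorem~\ref{Dug}) in order to transport the absolute $F$-Milutin structure supplied by the hypothesis on Tychonoff cubes down to an arbitrary Dugundji compactum $X$, and then to postcompose with the resulting Milutin surjection to extract the absolute $F$-valued retract property.

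Let $X$ be Dugundji and embed it into some Tychonoff cube $\II^\kappa$. By hypothesis $\II^\kappa$ is absolute $F$-Milutin, so there exist a Cantor cube $K$, a continuous surjection $f:K\to\II^\kappa$, and a continuous map $s_0:\II^\kappa\to FK$ such that $s_0(y)\in F(f^{-1}(y))\subset FK$ for every $y\in\II^\kappa$. The preimage $A=f^{-1}(X)$ is closed in the zero-dimensional compact cube $K$, and by Haydon's theorem $X$ is an absolute extensor in dimension $0$; hence the continuous surjection $f|_A:A\to X$ extends to a continuous surjection $r:K\to X$. This zero-dimensional extension is the heart of the argument and is the only place where the Dugundji hypothesis is used.

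Since $r$ agrees with $f$ on $A$, we have $f^{-1}(x)\subset r^{-1}(x)$ for every $x\in X$; factoring the inclusion $f^{-1}(x)\hookrightarrow K$ through $r^{-1}(x)$ and applying $F$ gives $F(f^{-1}(x))\subset F(r^{-1}(x))$ as subsets of $FK$ in the paper's convention. Consequently the restriction $s:=s_0|_X:X\to FK$ satisfies $s(x)\in F(r^{-1}(x))\subset FK$, which together with $r$ witnesses that $X$ is absolute $F$-Milutin.

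For the absolute $F$-valued retract property, let $X\subset Y$ be any embedding into a compact Hausdorff space $Y$. A coordinatewise application of the Tietze--Urysohn theorem extends the embedding $X\hookrightarrow\II^\kappa$ to a continuous map $\bar\iota:Y\to\II^\kappa$, and we set $\bar r:=Fr\circ s_0\circ\bar\iota:Y\to FX$. For $x\in X$ we have $\bar\iota(x)=x$ and $s_0(x)\in F(r^{-1}(x))\subset FK$; since $r$ collapses $r^{-1}(x)$ onto the singleton $\{x\}$, functoriality of $F$ applied to the factorisation $r^{-1}(x)\to\{x\}\hookrightarrow X$ gives $\bar r(x)\in F(\{x\})\subset FX$, as required. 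The inclusion $\AbE[0]\subset\AR[F]$ is then immediate from Haydon's theorem; the only nontrivial obstacle anywhere is the zero-dimensional extension producing $r$, everything else being routine diagram-chasing with the subset convention $FA\subset FX$.
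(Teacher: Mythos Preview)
Your proof is correct and follows essentially the same route as the paper's: both use the absolute $F$-Milutin data on a Tychonoff cube, invoke Haydon's theorem to extend the restricted surjection from the preimage of $X$ to a map $K\to X$ (your $r$, the paper's $\bar g$), and then postcompose with $F$ of this extension to obtain the $F$-valued retraction. The only cosmetic difference is that the paper first embeds the ambient space $Y$ into a Tychonoff cube and restricts the section to $Y$, whereas you fix one embedding $X\hookrightarrow\II^\kappa$ and then use Tietze--Urysohn to extend it over each $Y$; these are interchangeable.
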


\begin{proof} Let $X$ be a Dugundji compact space. To show that $X$ is an absolute $F$-Milutin absolute $F$-valued retract, fix any embedding $X\subset Y$ into a compact space $Y$. Let $Y\subset K$ be an embedding of $Y$ into a Tychonoff cube $K$. Since the Tychonoff cube $K$ is absolute $F$-Milutin, there exist a continuous map $g:C\to K$ from a Cantor cube $C=\{0,1\}^\kappa$
and  a continuous map $s:K\to FC$ such that $s(x)\in F(g^{-1}(x))\subset FC$ for each $x\in K$.

Consider the closed subset $Z=g^{-1}(X)$ in the Cantor cube $C$. By Haydon's Theorem~\ref{Dug},
 the Dugundji compact space $X$ is an absolute extensor in dimension zero. Consequently, the map $g|Z:Z\to X$ admits a continuous extension $\bar g:C\to X$. Consider the map $s|X:X\to FC$ and observe that for every $x\in X$ we get
 $$s(x)\in F(g^{-1}(x))\subset F(\bar g^{-1}(x))\subset FC.$$
 Consequently, the maps $\bar g:C\to X$ and $s|X:X\to FC$ witness that $X$ is absolute $F$-Milutin.

On the other hand, the map $r=F\bar g\circ s|Y:Y\to FX$ witnesses that $X$ is an absolute $F$-valued retract because
 $$r(x)=F\bar g(s(x))\in F\bar g(F(\bar g^{-1}(x)))\subset F(\{x\})\subset FX$$ for every $x\in X$.
\end{proof}

We shall say that a functor $F:\Comp\to\Comp$ admits {\em a tensor product} if for each cardinal $\kappa$ and each family of compacta $(X_\alpha)_{\alpha\in\kappa}$ there exists a continuous map $\otimes_{(X_\alpha)_{\alpha\in\kappa}}:\prod_{\alpha\in\kappa} F X_\alpha\to F(\prod_{\alpha\in\kappa} X_\alpha)$ which is natural by each argument. The latter means that for any maps $f_\alpha:X_\alpha\to Y_\alpha$, $\alpha\in\kappa$, between compact spaces and any $\beta\in\kappa$  the following diagram commutes:
$$\xymatrix{
&FX_\beta\\
\prod\limits_{\alpha\in\kappa}FX_\alpha
\ar^{\pr_\beta}[ru]\ar^{\bigotimes_{(X_\alpha)_{\alpha\in\kappa}}}[rr]
\ar_{\prod\limits_{\alpha\in\kappa}Ff_\alpha}[d]
&&F\big(\prod\limits_{\alpha\in\kappa}X_\alpha\big)
\ar_{F\pr_\beta}[lu]\ar^{F\big(\prod\limits_{\alpha\in\kappa}f_\alpha\big)}[d]\\
\prod\limits_{\alpha\in\kappa}FY_\alpha
\ar_{\bigotimes_{(Y_\alpha)_{\alpha\in\kappa}}}[rr]
&&F\big(\prod_{\alpha\in\kappa}Y_\alpha\big)
}
$$

According to \cite{tensorRad}, for any functors $F_1,F_2:\Comp\to\Comp$ admitting tensor products the composition $F_1\circ F_2$ admits a tensor product too.
It is known that each monadic functor (i.e., a functor that can be completed to a monad) admits a tensor product (see \cite[\S3.4]{TZ}).  In particular, the functors of probability measures $P$ \cite{Fedmir} and idempotent measures \cite{Zar}  are monadic and hence admit a tensor product. On the other hand, the functor $\exp\circ\exp$ of double hyperspace admits a tensor product but fails to be monadic, see \cite{tensorRad}.

\begin{theorem}\label{t3.8} Let $F:\Comp\to\Comp$ be a functor admitting a tensor product.
If the unit interval $\II=[0,1]$ is an absolute $F$-Milutin space, then every Tychonoff cube is an absolute $F$-Milutin space and $\AbE[0]\subset\AR[F]$.
\end{theorem}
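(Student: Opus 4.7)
The plan is to combine this theorem with Proposition~\ref{p3.7}: once we know every Tychonoff cube is absolute $F$-Milutin, the inclusion $\AbE[0]\subset\AR[F]$ follows immediately. Hence the real task is to show that if $\II$ is absolute $F$-Milutin and $F$ admits a tensor product, then every Tychonoff cube $\II^\kappa$ is absolute $F$-Milutin. The idea is to lift the absolute $F$-Milutin data for the single factor $\II$ to the product by taking coordinate-wise products and then tying the fibres together via the tensor product.

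More concretely, I would fix the witnessing data for $\II$: a surjection $f:K\to\II$ from a Cantor cube $K=\{0,1\}^\lambda$ together with a continuous map $s:\II\to FK$ such that $s(t)\in F(f^{-1}(t))\subset FK$ for all $t\in\II$. For the cube $\II^\kappa$ I would set $K_\alpha:=K$ and $f_\alpha:=f$ for each $\alpha\in\kappa$, take the product surjection
$$\tilde f:=\prod_{\alpha\in\kappa}f_\alpha\colon \tilde K:=\prod_{\alpha\in\kappa}K_\alpha\to\II^\kappa,$$
noting that $\tilde K\cong\{0,1\}^{\lambda\times\kappa}$ is still a Cantor cube, and then define
$$\tilde s\colon \II^\kappa\xrightarrow{\;(s)_{\alpha\in\kappa}\;}\prod_{\alpha\in\kappa}FK_\alpha\xrightarrow{\;\bigotimes_{(K_\alpha)}\;}F\tilde K.$$
Both arrows are continuous, so $\tilde s$ is continuous.

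The heart of the argument is to verify that $\tilde s(x)\in F(\tilde f^{-1}(x))\subset F\tilde K$ for every $x=(x_\alpha)\in\II^\kappa$, and this is where naturality of the tensor product is essential. For each $\alpha$, the hypothesis gives an element $s'(x_\alpha)\in F(f^{-1}(x_\alpha))$ with $Fi_\alpha(s'(x_\alpha))=s(x_\alpha)$, where $i_\alpha\colon f^{-1}(x_\alpha)\hookrightarrow K_\alpha$ is the inclusion. Applying the naturality square in the definition of a tensor product to the maps $i_\alpha$ yields
$$\bigotimes_{(K_\alpha)_{\alpha\in\kappa}}\Bigl(\bigl(Fi_\alpha(s'(x_\alpha))\bigr)_{\alpha\in\kappa}\Bigr)
=F\Bigl(\prod_{\alpha\in\kappa}i_\alpha\Bigr)\circ\bigotimes_{(f^{-1}(x_\alpha))_{\alpha\in\kappa}}\bigl((s'(x_\alpha))_{\alpha\in\kappa}\bigr),$$
and the right-hand side lies in $F\bigl(\prod_{\alpha}f^{-1}(x_\alpha)\bigr)=F(\tilde f^{-1}(x))$ after identification via $\prod_\alpha i_\alpha$. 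This gives exactly $\tilde s(x)\in F(\tilde f^{-1}(x))\subset F\tilde K$.

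The only real obstacle I anticipate is purely notational, namely the need to track the identification $a\in FA\subset FX$ through the naturality diagram for infinitely many factors; once one writes out the naturality square carefully, the computation is essentially forced. After this is done, Proposition~\ref{p3.7} delivers the second conclusion $\AbE[0]\subset\AR[F]$ for free.
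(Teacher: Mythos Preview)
Your proposal is correct and follows essentially the same approach as the paper: take the $\kappa$-th power of the witnessing surjection $K\to\II$, compose the coordinate-wise section $\II^\kappa\to(FK)^\kappa$ with the tensor product map $(FK)^\kappa\to F(K^\kappa)$, and invoke naturality of the tensor product (applied to the inclusions $f^{-1}(x_\alpha)\hookrightarrow K$) to land in the fibre $F(\tilde f^{-1}(x))$. If anything, you spell out the naturality step more explicitly than the paper does.
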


\begin{proof} By Proposition~\ref{p3.7}, it suffices to prove that each Tychonoff cube $\II^\kappa$ is an absolute $F$-Milutin space.
Since $\II$ is an absolute $F$-Milutin space, there exist a surjective map $g:C\to\II$ from a Cantor cube $C$ and a map $s:\II\to FC$ such that $s(x)\in F(g^{-1}(x))\subset FC$ for all $x\in\II$.
Consider the $\kappa$-th power $\bar g:C^\kappa\to\II^\kappa$, $\bar g:(x_\alpha)_{\alpha\in\kappa}\mapsto (g(x_\alpha))_{\alpha\in\kappa}$, of the map $g$ and the $\kappa$-th power $\bar s:\II^\kappa\to (FC)^\kappa$ of the map $s$.

Fix a tensor product $\otimes$ for the functor $F$ and let $\otimes_{(C)_{\alpha\in\kappa}}:(FC)^\kappa\to F(C^\kappa)$ be its component.
We claim that the map  $r=\otimes_{(C)_{\alpha\in\kappa}}\circ\bar s:\II^\kappa\to F(C^\kappa)$ witnesses that the Tychonoff cube $\II^\kappa$ is absolute $F$-Milutin. Indeed, given any point $x=(x_\alpha)_{\alpha\in\kappa}\in\II^\kappa$, consider the element $\bar s(x)\in \prod_{\alpha\in\kappa}F(g^{-1}(x_\alpha))\subset\prod_{\alpha\in\kappa}FC$. By the naturality of the tensor product, $$r(x)=\otimes_{(g^{-1}(x_\alpha))_{\alpha\in\kappa}}\circ \bar s(x)\in F\Big(\prod_{\alpha\in\kappa}g^{-1}(x_\alpha)\Big)=F(\bar g^{-1}(x))\subset F(C^\kappa).$$
\end{proof}

In light of Theorem~\ref{t3.8}, it is natural to pose a problem of recognizing functors $F$ for which the unit interval $\II=[0,1]$ is absolute $F$-Milutin. We shall give an answer to this problem for functionally continuous monomorphic functors which admit a tensor product.

A functor $F:\Comp\to\Comp$ is called {\em functionally continuous} if for each compact spaces $X,Y$ the map $F_{X,Y}:C(X,Y)\to C(FX,FY)$, $F_{X,Y}:f\mapsto Ff$, is continuous. Here by $C(X,Y)$ we denote the space of all continuous functions from $X$ to $Y$, endowed with the compact-open topology.
By \cite[2.2.3]{TZ}, a monomorphic [epimorphic] functor $F:\Comp\to\Comp$ is functionally continuous if [and only if] it is continuous (i.e., preserves limits of inverse spectra). We recall that a functor $F:\Comp\to\Comp$ is {\em monomorphic} (resp. {\em epimorphic}) if $F$ preserves injective (resp. surjective) maps.

\begin{theorem}\label{t3.9} For a monomorphic functionally continuous functor $F:\Comp\to\Comp$, admitting a tensor product, the following conditions are equivalent:
\begin{enumerate}
\item the closed interval $[0,1]$ is an absolute $F$-Milutin space;
\item every Tychonoff cube is an absolute $F$-Milutin space;
\item $\AbE[0]\subset \AR[F]$;
\item the doubleton $\{0,1\}$ is an absolute $F$-valued retract;
\item there are points $a\in F(\{0\})\subset F(\{0,1\})$ and $b\in F(\{1\})\subset F(\{0,1\})$ which can be linked by a continuous path in $F(\{0,1\})$.
\item there is a point $a\in F(\{0\})$ such that for the embeddings $i_0:\{0\}\to\{0\}\subset\{0,1\}$ and $i_1:\{0\}\to\{1\}\subset\{0,1\}$ the points $Fi_0(a)$ and $Fi_1(a)$ can be linked by a continuous path in $F(\{0,1\})$;
\end{enumerate}
\end{theorem}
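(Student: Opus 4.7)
The plan is to establish the cycle of implications $(1)\Rightarrow(2)\Rightarrow(3)\Rightarrow(4)\Rightarrow(5)\Rightarrow(6)\Rightarrow(1)$. The first four links follow from earlier results or routine observations: $(1)\Rightarrow(2)$ is literally Theorem~\ref{t3.8}; $(2)\Rightarrow(3)$ is Proposition~\ref{p3.7}; $(3)\Rightarrow(4)$ holds because the doubleton $\{0,1\}$, being a finite discrete and hence Dugundji compact space, lies in $\AbE[0]\subset\AR[F]$; and for $(4)\Rightarrow(5)$ one embeds $\{0,1\}\hookrightarrow[0,1]$ and invokes the absolute $F$-valued retract property to obtain a continuous map $r:[0,1]\to F(\{0,1\})$ with $r(0)\in F(\{0\})$ and $r(1)\in F(\{1\})$, which serves as the required path.

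For $(5)\Rightarrow(6)$ I would perform a short functorial manipulation. Given a path $\gamma:[0,1]\to F(\{0,1\})$ from $a\in F(\{0\})$ to $b\in F(\{1\})$, compose with $F\pi_0$, where $\pi_0:\{0,1\}\to\{0\}$ is the constant map. Since $\pi_0\circ i_0=\mathrm{id}_{\{0\}}$ and $\pi_0\circ i'_1$ (with $i'_1:\{1\}\hookrightarrow\{0,1\}$) is the bijection $j':\{1\}\to\{0\}$, functoriality yields $F\pi_0\circ Fi_0=\Id_{F(\{0\})}$ and $F\pi_0\circ Fi'_1=Fj'$. Hence $F\pi_0\circ\gamma$ is a path in $F(\{0\})$ from $a$ to some point $b^*$. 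Post-composing with $Fi_1$ produces a path in $F(\{0,1\})$ from $Fi_1(a)$ to $Fi_1(b^*)$; the identity $i_1\circ j'=i'_1$ then forces $Fi_1(b^*)=b$. Concatenating $\gamma$ with the reverse of this auxiliary path yields a continuous path from $a$ to $Fi_1(a)$ in $F(\{0,1\})$, which is precisely condition~(6) with the same point $a$.

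The heart of the proof, and the step I expect to be the main obstacle, is $(6)\Rightarrow(1)$: building an absolute $F$-Milutin structure on $[0,1]$ from the given path. The approach is a Milutin-type construction mediated by the tensor product. Taking the Cantor cube $K=\{0,1\}^\w$ and a continuous surjection $f:K\to[0,1]$ (the binary expansion map being the first natural candidate), I would assemble a continuous section $s:[0,1]\to FK$ as an infinite tensor product $s(t)=\bigotimes_n\gamma(\phi_n(t))$ for suitable continuous ``digit interpolation'' functions $\phi_n:[0,1]\to[0,1]$: the value $\phi_n(t)$ equals $0$ where the $n$-th binary digit of $t$ is unambiguously $0$, equals $1$ where it is unambiguously $1$, and traverses $[0,1]$ on a narrow transition interval straddling the boundary. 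The delicate point is that at dyadic rationals several digits transition simultaneously, and the natural support of the tensor product is a product of per-digit supports strictly larger than the correlated set $f^{-1}(t)$; the naive formula therefore violates the required containment $s(t)\in F(f^{-1}(t))$. Overcoming this will likely require either replacing the binary expansion by a Milutin map whose fibers carry a uniform product structure compatible with the tensor product, or an equally careful coupling between the $\phi_n$ so that the ``free'' coordinates at each $t$ genuinely contribute only points of $f^{-1}(t)$.
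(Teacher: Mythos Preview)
Your treatment of the implications $(1)\Rightarrow(2)\Rightarrow(3)\Rightarrow(4)\Rightarrow(5)$ matches the paper's. Your argument for $(5)\Rightarrow(6)$ is a minor variant: the paper composes $\gamma$ with $Fr$ for the retraction $r:\{0,1\}\to\{1\}$ to obtain a path in $F(\{1\})\subset F(\{0,1\})$ from $Fi_1(a)$ to $b$, then concatenates. Your version routes through $F(\{0\})$ via the constant map $\pi_0$ and then re-embeds via $Fi_1$; the bookkeeping is correct and the conclusion is the same.

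The gap is in $(6)\Rightarrow(1)$, and you have diagnosed it accurately: with the binary-expansion surjection $\{0,1\}^\w\to[0,1]$ the fibre over a dyadic rational is a two-point set that is \emph{not} a product of coordinate subsets, so the tensor product of your per-coordinate paths lands in $F$ of a product strictly larger than the fibre. Your second suggested remedy (``coupling the $\phi_n$'') cannot work for this map, since the fibre simply does not have product form. Your first remedy is exactly what the paper does, and it is worth seeing how, because this is also where the hypothesis of functional continuity is used.

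The paper replaces the binary map by a surjection whose fibres are \emph{genuine products}. For each $n$ it covers $\II$ by the $2^n$ slightly enlarged, overlapping closed intervals $\JJ_{n,k}=[0,1]\cap\big[\tfrac{k-1}{2^n}-\tfrac{1}{2^{n+2}},\tfrac{k}{2^n}+\tfrac{1}{2^{n+2}}\big]$, takes $\JJ_n=\coprod_{k}\JJ_{n,k}$ with the obvious projection $\pr_n:\JJ_n\to\II$, and sets
\[
K=\Big\{(x_n)\in\prod_n\JJ_n:\pr_n(x_n)=\pr_m(x_m)\text{ for all }n,m\Big\},\qquad \pr:K\to\II.
\]
Then $K$ is a metrizable zero-dimensional perfect compactum (hence a Cantor set), and crucially $\pr^{-1}(t)=\prod_n\pr_n^{-1}(t)$ is a product for every $t$, each factor having one or two points.

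To produce the section, one first uses functional continuity: the map $C(\{0,1\},X)\to C(F\{0,1\},FX)$, $f\mapsto Ff$, is continuous, so the assignment
\[
\Psi_X:X\times X\times\II\to FX,\qquad \Psi_X(x_0,x_1,t)=Ff_{x_0,x_1}(\gamma(t))
\]
(with $f_{x_0,x_1}:\{0,1\}\to X$, $i\mapsto x_i$) is continuous, and satisfies $\Psi_X(x_0,x_1,0)\in F(\{x_0\})$, $\Psi_X(x_0,x_1,1)\in F(\{x_1\})$. Note that your naive formula $\bigotimes_n\gamma(\phi_n(t))$ avoided this step because the target of each factor was the fixed two-point space $\{0,1\}$; once the factors are the variable spaces $\JJ_n$, functional continuity is essential.

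Now for each $n$ define $s_n:\II\to F\JJ_n$ by $s_n(t)=\Psi_{\JJ_n}(t_0,t_1,\tau(t))$, where $\{t_0,t_1\}=\pr_n^{-1}(t)$ and $\tau(t)$ linearly traverses $[0,1]$ across each overlap $\JJ_{n,k}\cap\JJ_{n,k+1}$ and is constant elsewhere; then $s_n(t)\in F(\pr_n^{-1}(t))$. Finally set $s(t)=\bigotimes_n s_n(t)$. By naturality of the tensor product,
\[
s(t)\in F\Big(\prod_n\pr_n^{-1}(t)\Big)=F\big(\pr^{-1}(t)\big)\subset FK,
\]
and monomorphicity of $F$ lets one regard $FK\subset F(\prod_n\JJ_n)$. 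This $s$ witnesses that $\II$ is absolute $F$-Milutin.

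In short: your outline is right up to and including the identification of the obstacle in $(6)\Rightarrow(1)$; what is missing is the actual construction of the product-fibred Cantor surjection and the use of functional continuity to transport $\gamma$ into each $F\JJ_n$.
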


\begin{proof} We shall prove the implications $(1)\Ra(2)\Ra(3)\Ra(4)\Ra(5)\Ra(6)\Ra(1)$.
\smallskip

The implications $(1)\Ra(2)\Ra(3)$ have been proved in Theorem~\ref{t3.8} and Proposition~\ref{p3.7} while $(3)\Ra(4)$ is trivial.
\smallskip

$(4)\Ra(5)$ Assume that the doubleton $\{0,1\}$ is an absolute $F$-valued retract. Then there is a continuous map $\gamma:[0,1]\to F(\{0,1\})$ such that $\gamma(x)\in F(\{x\})\subset F(\{0,1\})$ for every $x\in\{0,1\}$. The map $\gamma$ can be considered as a continuous path in $F(\{0,1\})$ linking the points $a=\gamma(0)\in F(\{0\})\subset F(\{0,1\})$ and $b=\gamma(1)\in F(\{1\})\subset F(\{0,1\})$.
\smallskip

$(5)\Ra(6)$ Assume that some points $a_0\in F(\{0\})\subset F(\{0,1\})$ and $a_1\in F(\{1\})\subset F(\{0,1\})$ can be linked by a continuous path $\gamma:[0,1]\to F(\{0,1\})$ such that $\gamma(i)=a_i$ for each $i\in\{0,1\}$. Since $a_0\in F(\{0\})\subset F(\{0,1\})$, there is a point $a\in F(\{0\})$ such that $a_0=Fi_0(a)$ where $i_0:\{0\}\to\{0,1\}$ is the identity embedding.
Let $i_1:\{0\}\to\{1\}\subset\{0,1\}$ be the other embedding of $\{0\}$ into $\{0,1\}$.

Consider the retraction $r:\{0,1\}\to\{1\}\subset\{0,1\}$ and observe that the map $Fr:F(\{0,1\})\to F(\{1\})\subset F(\{0,1\})$ is a retraction of the space $F(\{0,1\})$ onto its subspace $F(\{1\})=i^{\{0,1\}}_{\{1\}}\big(F(\{1\})\big)$. It follows that  $\tilde\gamma=Fr\circ\gamma:\II\to F(\{1\})\subset F(\{0,1\})$ is a continuous path linking the points $\tilde\gamma(0)=Fr(a_0)=FrFi_0(a)=F(r\circ i_0)(a)=Fi_1(a)$ and $\tilde\gamma(1)=Fr(a_1)=a_1$. Joining the pathes $\gamma$ and $\tilde\gamma$ together, we can construct a continuous path in $F(\{0,1\})$ linking the points $Fi_0(a)=a_0$ and $Fi_1(a)$.
\smallskip

$(6)\Ra(1)$ Assume that for some $a\in F(\{0\})$ and the embeddings $i_0:\{0\}\to\{0\}\subset\{0,1\}$ and $i_1:\{0\}\to\{1\}\subset\{0,1\}$ the points $a_0=Fi_0(a)$ and $a_1=Fi_1(a)$ can be linked by a continuous path $\gamma:[0,1]\to F(\{0,1\})$ such that $\gamma(i)=a_i$ for $i\in\{0,1\}$.
For every compact space $X$ the functional continuity of the functor $F$ implies the continuity of the map $F_{X}:C(\{0,1\},X)\to C(F\{0,1\},FX)$, $F_{X}:f\mapsto Ff$, which implies the continuity of the map
$$\Phi_X:F(\{0,1\})\times C(\{0,1\},X)\to FX,\;\;\Phi_X:(c,f)\mapsto Ff(c).$$

Identify the square $X\times X$ of $X$ with the function space $C(\{0,1\},X)$, assigning to each pair $(x_0,x_1)\in X\times X$ the function $f_{x_0,x_1}:\{0,1\}\to X$, $f_{x_0,x_1}:i\mapsto x_i$. The continuity of the function $\Phi_X$ implies the continuity of the function
$$\Psi_X:X\times X\times \II\to FX,\;\;\Psi_X:(x,y,t)\mapsto \Phi_X(\gamma(t),f_{x,y})=Ff_{x,y}(\gamma(t)).$$

For every $x\in X$ consider the embeddings $f_x:\{0\}\to\{x\}\subset X$ and $Ff_x:F(\{0\})\to FX$.
For every $x,y\in X$ the equalities $f_{x,y}\circ i_0=f_x$ and $f_{x,y}\circ i_1=f_y$ imply
that
$$\Psi_X(x,y,0)=Ff_{x,y}(\gamma(0))=Ff_{x,y}Fi_0(a)=F(f_{x,y}\circ i_0)(a)=Ff_x(a)$$and
$$\Psi_X(x,y,1)=Ff_{x,y}(\gamma(1))=Ff_{x,y}Fi_1(a)=F(f_{x,y}\circ i_1)(a)=Ff_y(a).$$

For every $n\in\w$ and $k\in\{1,\dots,2^n\}$ consider the interval $\II_{n,k}=\big[\frac{k-1}{2^n},\frac{k}{2^n}\big]\subset\II$ and its closed neighborhood $\JJ_{n,k}=[0,1]\cap\big[\frac{k-1}{2^n}-\frac1{2^{n+2}},\frac{k}{2^n}+\frac1{2^{n+2}}\big]$.
We shall consider the finite family $\{\JJ_{n,k}\}_{k=1}^{2^n}$ as a compact space endowed with the discrete topology.

Let $$\JJ_{n}=\bigcup_{k=1}^{2^n}\JJ_{n,k}\times\{\JJ_{n,k}\}\subset \II\times\{\JJ_{n,k}\}_{k=1}^{2^n}$$ be the topological sum of the family $\{\JJ_{n,k}\}_{k=1}^{2^k}$ and $\pr_n:\JJ_n\to\II$, $\pr_n:(t,i)\mapsto t$, be the coordinate projection. For every $t\in\II$ let $k_0\le k_1$ be unique numbers such that $\{k_0,k_1\}=\{k\in\{1,\dots,2^n\}:t\in\JJ_{n,k}\}$ and let $t_0=(t,\JJ_{n,k_0})$, $t_1=(t,\JJ_{n,k_1})$ be the points composing the preimage $\pr_n^{-1}(t)=\{t_0,t_1\}$. Observe that for every\newline $t\in\II\setminus\bigcup_{k=1}^{2^n-1}(\JJ_{n,k}\cap \JJ_{n,k+1})$ the points $t_0$ and $t_1$ coincide.

Define a continuous map $s_n:\II\to F(\JJ_n)$ letting
$$s_n(t)=\begin{cases}
\Psi_{\JJ_n}\big(t_0,t_1,\frac{t-\min\JJ_{n,k+1}}{\max\JJ_{n,k}-\min\JJ_{n,k+1}}\big),&\mbox{if $t\in\JJ_{n,k}\cap \JJ_{n,k+1}$ for some $0<k<2^n$},\\
\Psi_{\JJ_n}(t_0,t_1,0)=\Psi_{X_n}(t_0,t_1,1),&\mbox{otherwise},
\end{cases}
$$
and observe that $s_n(t)\in F(\{t_0,t_1\})\subset F\JJ_n$ for every $t\in\II$.

In the Tychonoff product $\JJ_\w=\prod_{n\in\w}\JJ_n$ consider the closed subset
$$K=\{(x_n)\in \JJ_\w:\forall n,m\in\w\;\;\pr_n(x_n)=\pr_m(x_m)\}$$and let $\pr:K\to\II$ be the projection on $\II$ defined by $\pr((x_n)_{n\in\w})=\pr_1(x_1)$ for $(x_n)_{n\in\w}\in K$.
It can be shown that $K$ is a compact zero-dimensional space without isolated points, so $K$ is homeomorphic to the Cantor cube $\{0,1\}^\w$ according to the Brouwer theorem \cite[7.4]{Ke} characterizing the Cantor set. We claim that the map $\pr:K\to\II$ witnesses that the interval $\II=[0,1]$ is absolute $F$-Milutin.

By our assumption, the functor $F$ admits a tensor product $\otimes$. Let  $$\otimes_{(\JJ_n)_{n\in\w}}:\prod_{n\in\w}F\JJ_{n}\to F\big(\prod_{n\in\w}\JJ_n)=F\JJ_\w$$ be its component for the family $(\JJ_n)_{n\in\w}$.

 Consider the continuous map $s:\II\to  F\JJ_\w$ defined by $s(t)=\otimes_{(\JJ_n)_{n\in\w}}\big((s_n(t))_{n\in\w}\big)$.
Since the functor $F$ is monomorphic, for the identity embedding $i_K:K\to \JJ_\w$ the map $Fi_K:FK\to F\JJ_\w$ is a topological embedding, so we can (and will) identify $FK$ with its image $Fi_K(FK)$ in $F\JJ_\w$. We claim that for every $t\in\II$ the point $s(t)$ is contained in the set $F(\pr^{-1}(t))=F(\prod_{n\in\w}\pr_n^{-1}(t))\subset FK\subset F\JJ_\w$.

By the definition of the map $s_n$, for every $n\in\w$ the point $s_n(t)$ is contained in the set $F(\pr_n^{-1}(t))\subset F(\JJ_n)$ and then $(s_n(t))_{n\in\w}\in \prod_{n\in\w}F(\pr_n^{-1}(t))\subset \prod_{n\in\w}F(\JJ_n)$. Let $$\otimes_{(\pr_n^{-1}(t))_{n\in\w}}:\prod_{n\in\w}F(\pr_n^{-1}(t))\to F\big(\prod_{n\in\w}\pr_n^{-1}(t)\big)=F(\pr^{-1}(t))\subset FK$$ be the component of the tensor product for the family $(\pr_n^{-1}(t))_{n\in\w}$. The naturality of the tensor product guarantees that
$$s(t)=\otimes_{(\JJ_n)_{n\in\w}}\big((s_n(t))_{n\in\w}\big)=
\otimes_{(\pr^{-1}(t))_{n\in\w}}\big(s_n(t))_{n\in\w}\big)\in F\big(\prod_{n\in\w}\pr_n^{-1}(t)\big)=F(\pr^{-1}(t))\subset FK.$$
So, $s:\II\to FK$ is a well-defined continuous map witnessing that the closed interval $\II=[0,1]$ is an absolute $F$-Milutin space.
\end{proof}

Combining Theorem~\ref{t3.9} with Shchepin's Theorem~\ref{shchep}, we get:

\begin{corollary}\label{cor1} If a normal functor $F:\Comp\to\Comp$ admits a tensor product, then the equality $\AR[F]=\AbE[0]$ holds if and only if $\{0,1\}\in\AR[F]$.
\end{corollary}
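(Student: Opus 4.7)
The plan is to derive the corollary as an immediate consequence of Theorems~\ref{shchep} and \ref{t3.9}, so the actual work consists of checking that the hypotheses of the latter are met by a normal functor admitting a tensor product. A normal functor is by definition continuous and monomorphic (among other things), and the authors have already invoked \cite[2.2.3]{TZ} to the effect that a monomorphic continuous functor on $\Comp$ is functionally continuous. Hence Theorem~\ref{t3.9} is available for any normal $F$ admitting a tensor product, and in particular the equivalence of its conditions (3) and (4) can be used freely.

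For the ``only if'' direction, I would simply observe that the doubleton $\{0,1\}$ is a finite discrete compactum and therefore trivially lies in $\AbE[0]$; if one assumes $\AR[F]=\AbE[0]$ it must then also lie in $\AR[F]$. For the ``if'' direction, the hypothesis $\{0,1\}\in\AR[F]$ is precisely condition (4) of Theorem~\ref{t3.9}, and the implication $(4)\Rightarrow(3)$ in that theorem yields $\AbE[0]\subset\AR[F]$. The opposite inclusion $\AR[F]\subset\AbE[0]$ is exactly the content of Shchepin's Theorem~\ref{shchep}, which is where normality of $F$ enters. Combining the two inclusions completes the argument.

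I do not anticipate any genuine obstacle; the whole corollary is a clean chaining of two already-established results. The only point that warrants a quick sanity check is the compatibility between the notion of ``normal functor'' used here and the assumptions of Theorem~\ref{t3.9}, namely monomorphicity together with functional continuity. Since the standard definition from \cite{TZ} (and from Shchepin's paper \cite{SCH}) bundles monomorphicity and continuity into normality, and since the cited \cite[2.2.3]{TZ} upgrades continuity to functional continuity in the presence of monomorphicity, the matching is automatic and no extra hypothesis is needed.
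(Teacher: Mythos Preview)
Your proposal is correct and follows exactly the approach the paper intends: the paper's own proof is the single sentence ``Combining Theorem~\ref{t3.9} with Shchepin's Theorem~\ref{shchep}, we get,'' and you have simply unpacked this, including the routine check that normality supplies the monomorphicity and functional continuity needed for Theorem~\ref{t3.9} and the trivial observation that $\{0,1\}\in\AbE[0]$ for the forward direction.
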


Since each monadic normal functor admit a tensor product (see \cite[\S3.4]{TZ}), this corollary implies:

\begin{corollary}\label{mon} For a monadic normal functor $F:\Comp\to\Comp$ the equality $\AR[F]=\AbE[0]$ holds if and only if $\{0,1\}\in \AR[F]$.
\end{corollary}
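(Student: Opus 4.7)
The plan is to obtain this corollary as an immediate consequence of Corollary~\ref{cor1}. The only piece missing from Corollary~\ref{cor1} is the hypothesis that $F$ admits a tensor product, so the entire task reduces to verifying that every monadic normal functor on $\Comp$ admits a tensor product.

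First, I would invoke the fact, established in \cite[\S3.4]{TZ} and referenced explicitly earlier in the paper, that any monadic functor $F:\Comp\to\Comp$ admits a tensor product. The standard construction is as follows: if $F$ is the functorial part of a monad $(F,\eta,\mu)$, then for a family $(X_\alpha)_{\alpha\in\kappa}$ of compacta, given $(a_\alpha)_{\alpha\in\kappa}\in\prod_{\alpha\in\kappa}FX_\alpha$, one considers the map $\prod_{\alpha\in\kappa}X_\alpha\to F(\prod_{\alpha\in\kappa}X_\alpha)$ obtained by combining the unit $\eta$ with the coordinate projections, and uses the multiplication $\mu$ together with the elements $a_\alpha$ to assemble the required point of $F(\prod_{\alpha\in\kappa}X_\alpha)$. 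The naturality in each argument then follows from the monad axioms. Since this is carried out in detail in the cited reference, I would not reproduce it here.

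Once this is in hand, the corollary is immediate: given a monadic normal functor $F:\Comp\to\Comp$, we have established that $F$ admits a tensor product, so Corollary~\ref{cor1} applies and yields the equivalence $\AR[F]=\AbE[0]\iff\{0,1\}\in\AR[F]$.

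There is no real obstacle here; the statement is a straightforward packaging of Corollary~\ref{cor1} together with the cited structural fact about monadic functors. The only thing one must be careful about is that the reference \cite[\S3.4]{TZ} indeed covers monads in $\Comp$ (not merely in some more restricted category), but this is guaranteed by the earlier discussion in the paper immediately preceding Theorem~\ref{t3.8}, where the tensor products for $P$ and $I$ are obtained from their monadic structure by exactly this argument.
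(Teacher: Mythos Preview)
Your proposal is correct and follows exactly the paper's own approach: the paper derives Corollary~\ref{mon} from Corollary~\ref{cor1} by citing \cite[\S3.4]{TZ} for the fact that monadic (normal) functors admit tensor products. Your additional sketch of the tensor-product construction is not needed but is harmless.
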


Applying Corollary~\ref{mon} to the functor of idempotent measures $I$ (which  monadic and normal \cite{Zar}), we get the following corollary answering a problem posed in \cite{AV}.

\begin{corollary} The functor of idempotent measures $I$ has $\AR[I]=\AbE[0]$.
\end{corollary}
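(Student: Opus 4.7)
The plan is to invoke Corollary~\ref{mon} directly. Since the functor of idempotent measures $I:\Comp\to\Comp$ is known from \cite{Zar} to be both monadic and normal, Corollary~\ref{mon} reduces the desired equality $\AR[I]=\AbE[0]$ to verifying the single membership $\{0,1\}\in\AR[I]$. By the chain of equivalences in Theorem~\ref{t3.9}, and in particular the equivalence $(4)\Leftrightarrow(5)$, this in turn reduces to exhibiting a continuous path in $I(\{0,1\})$ linking some point of $I(\{0\})\subset I(\{0,1\})$ to some point of $I(\{1\})\subset I(\{0,1\})$.

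For the endpoints I would simply take the Dirac functionals $\delta_{\{0,1\}}(0)$ and $\delta_{\{0,1\}}(1)$, which under the canonical identifications $I(\{i\})\subset I(\{0,1\})$ lie respectively in $I(\{0\})$ and $I(\{1\})$. To connect them, I would invoke the standard Maslov description of idempotent measures on a finite space: every $\mu\in I(\{0,1\})$ has the form
$$\mu(\varphi)=\max\bigl(\lambda_0+\varphi(0),\,\lambda_1+\varphi(1)\bigr)$$
for a unique pair $(\lambda_0,\lambda_1)\in[-\infty,0]^2$ with $\max(\lambda_0,\lambda_1)=0$. This identifies $I(\{0,1\})$ with the union of the two segments $\{0\}\times[-\infty,0]$ and $[-\infty,0]\times\{0\}$ glued at $(0,0)$, i.e.\ with a closed interval, whose endpoints $(0,-\infty)$ and $(-\infty,0)$ correspond exactly to the two Diracs. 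The desired path $\gamma:[0,1]\to I(\{0,1\})$ is then obtained by concatenating continuous parametrizations of these two segments.

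The only step requiring any care is the verification that this coordinate description of $I(\{0,1\})$ is accurate and that $\gamma$ lands continuously inside $I(\{0,1\})\subset\IR^{C(\{0,1\})}$; however, since $C(\{0,1\})\cong\IR^2$ the evaluation $\mu(\varphi)$ depends continuously on $(\lambda_0,\lambda_1)$, and the displayed formula is well-documented in the literature on idempotent measures \cite{Zar}. Once $\gamma$ is in hand, condition~(5) of Theorem~\ref{t3.9} is satisfied, giving $\{0,1\}\in\AR[I]$, and Corollary~\ref{mon} then delivers $\AR[I]=\AbE[0]$, resolving the problem posed in \cite{AV}.
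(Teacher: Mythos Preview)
Your argument is correct and follows the same route as the paper: invoke Corollary~\ref{mon} using that $I$ is monadic and normal (both facts cited from \cite{Zar}), which reduces everything to $\{0,1\}\in\AR[I]$. The paper leaves this last membership implicit, whereas you spell it out via Theorem~\ref{t3.9}(5) and the standard description of $I(\{0,1\})$ as an arc; this is a welcome clarification but not a different approach.
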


For non-normal functors $F$ the class $\AR[F]$ need not be contained in the class $\AbE[0]$ of Dugundji compacta. A typical example is the functor of superextension $\lambda$, see \cite{Ivanov}.

\begin{theorem}[Ivanov]\label{ivanov} The class $\AR[\lambda]$ of absolute $\lambda$-valued retracts coincides with the class of openly generated connected compacta.
\end{theorem}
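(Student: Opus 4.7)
My plan is to prove both inclusions $\AR[\lambda]\subseteq\,$``openly generated connected compacta'' and the reverse. The key structural facts I would use about $\lambda$ are that it is a continuous, monomorphic and epimorphic functor preserving singletons, and that for every non-degenerate metric continuum $K$ the superextension $\lambda K$ is homeomorphic to the Hilbert cube (the van Mill--Verbeek theorem); in particular $\lambda K$ is an absolute retract.

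For the sufficiency direction, let $X$ be an openly generated connected compact space. By Shchepin's spectral theorem, $X$ is the limit of a continuous well-ordered inverse spectrum $\{X_\alpha, p_\alpha^\beta : \alpha\le\beta<\tau\}$ of metrizable compacta with open surjective bonding maps, starting from a metric $X_0$; the connectedness of $X$ can be propagated so that each $X_\alpha$ and each fiber of each $p_\alpha^{\alpha+1}$ is connected. Given any embedding $X\subset Y$, I would construct by transfinite recursion continuous maps $r_\alpha:Y\to\lambda X_\alpha$ with $r_\alpha(x)=\delta_{X_\alpha}(p_\alpha(x))$ for every $x\in X$, where $p_\alpha:X\to X_\alpha$ is the limit projection, and then obtain $r:Y\to\lambda X$ from the compatible system via the continuity of $\lambda$ on inverse spectra. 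The base step $\alpha=0$ uses that $\lambda X_0$ is an absolute retract, so Tietze--Dugundji extends $\delta_{X_0}\circ p_0$ from $X$ to $Y$. The limit stages are immediate from continuity. The successor step---lifting $r_\alpha$ through $\lambda p_\alpha^{\alpha+1}$ while keeping the prescribed values on $X$---is the core: one uses that $p_\alpha^{\alpha+1}$ is an open surjection with connected metric fibers, so its $\lambda$-image enjoys a softness property reducing the lift to a Michael-type continuous selection.

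For the necessity direction, assume $X\in\AR[\lambda]$. To prove connectedness of $X$, suppose to the contrary that $X=A\sqcup B$ is a disjoint union of nonempty closed sets. Since every maximal linked system on $X$ must contain exactly one of the disjoint closed sets $A$ or $B$, the space $\lambda X$ splits as a disjoint union of two nonempty clopen pieces $\lambda_A X\sqcup\lambda_B X$, with $\delta_X(A)\subset\lambda_A X$ and $\delta_X(B)\subset\lambda_B X$. Embedding $X$ into its cone $CX$ and applying a $\lambda$-valued retraction $r:CX\to\lambda X$ extending $\delta_X$ would produce a path in $\lambda X$ joining points of $\lambda_A X$ to points of $\lambda_B X$, which is impossible. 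To establish that $X$ is openly generated, I would invoke Shchepin's characterization of openly generated compacta via inverse spectra with open surjective bonding maps onto metrizable factors, and argue that the existence of $\lambda$-valued retractions for all embeddings of $X$ forces such a spectral representation, exploiting that $\lambda$ is continuous and weight-preserving.

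The main obstacle I anticipate is the successor stage of the transfinite induction in the sufficiency direction: to lift $r_\alpha$ to $r_{\alpha+1}$ across $\lambda p_\alpha^{\alpha+1}$ compatibly with the prescribed values on $X$, one needs a parameterized version of the van Mill--Verbeek theorem, guaranteeing that the $\lambda$-image of an open surjection with connected metric fibers has fibers that are absolute retracts suitable for continuous selection. Weaving such local lifts into a coherent global transfinite tower without destroying the agreement on $X$ is the heart of Ivanov's original argument; the openly-generated half of the necessity direction, which must upgrade the mere existence of retractions into a spectral property, is the next most delicate point.
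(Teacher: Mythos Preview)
The paper does not contain a proof of this theorem: it is stated as Ivanov's result and attributed to \cite{Ivanov} (A.~Ivanov, \emph{Dokl.\ Akad.\ Nauk SSSR} \textbf{262} (1982), 526--528), with no argument supplied. So there is no ``paper's own proof'' against which your proposal can be compared.

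As a sketch of Ivanov's actual argument, your outline of the sufficiency direction is broadly in the right spirit: the spectral representation of an openly generated compactum, the van Mill theorem that $\lambda K$ is a Hilbert cube for a non-degenerate metric continuum $K$, and a transfinite lifting through the $\lambda$-image of open bonding maps are indeed the ingredients. You correctly identify the successor step as the crux and candidly say you would need a ``parameterized'' (soft-map) version of the Hilbert-cube result; that is precisely what Ivanov establishes, and without it the proposal is a plan rather than a proof.

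The necessity direction is where your proposal has a genuine gap. The connectedness part is fine. But for ``$X\in\AR[\lambda]\Rightarrow X$ is openly generated'' you offer only the sentence that one should ``invoke Shchepin's characterization \dots\ and argue that the existence of $\lambda$-valued retractions \dots\ forces such a spectral representation, exploiting that $\lambda$ is continuous and weight-preserving''. Continuity and weight-preservation alone do not yield open generation (many weight-preserving continuous functors $F$ have $\AR[F]\not\subset\OG$; see Theorem~\ref{main2} and its corollaries in this paper). Ivanov's argument here is specific to the combinatorics of maximal linked systems: a $\lambda$-valued retraction produces, for disjoint open sets in $X$, disjoint open extensions in the ambient space, which is exactly the Shirokov--\v S\v cepin criterion of Theorem~\ref{shi}. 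Your proposal does not indicate how to extract such an extension operator from an arbitrary map into $\lambda X$, and that is the missing idea.
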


This fact was crucial in the proof of the coincidence of the class $\OG$ of openly generated compacta  with the class $\kappa\mathsf M$ of $\kappa$-metrizable compacta, see \cite{SCH}. According to \cite{Shi} and \cite{SCH}, $\kappa$-metrizable compacta can be characterized as follows:

\begin{theorem}[Shirokov, \v S\v cepin]\label{shi} A compact Hausdorff space $(X,\tau_X)$ is $\kappa$-metrizable if and only if it is openly-generated if and only if for any embedding $X\subset Y$ into a compact Hausdorff space $(Y,\tau_Y)$ there is an  operator $e:\tau_X\to\tau_Y$ such that
\begin{itemize}
\item $e(U)\cap X=U$ for all open sets $U\subset X$ and
\item $e(U)\cap e(V)=\emptyset$ for any disjoint open sets $U,V\subset X$.
\end{itemize}
\end{theorem}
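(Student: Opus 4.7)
The plan is to establish the equivalences cyclically, taking $(1)\Leftrightarrow(2)$ as \v S\v cepin's classical theorem whose proof uses the spectral representation of openly-generated compacta as limits of continuous inverse spectra of metrizable compacta with open bonding projections (a $\kappa$-metric being essentially a coherent family of continuous pseudometrics indexed by such a spectrum). The remaining work is therefore to prove $(2)\Leftrightarrow(3)$.

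For the implication $(2)\Ra(3)$, assume $X=\varprojlim\{X_\alpha, p^\beta_\alpha\}$ with each $X_\alpha$ metrizable and all projections $p^\beta_\alpha$ and $p_\alpha:X\to X_\alpha$ open. Fix any embedding $X\subset Y$ and a metric $d_\alpha$ compatible with the topology on each $X_\alpha$. For an open $V\subset X_\alpha$ define $\psi_V:X_\alpha\to[0,1]$ by $\psi_V(z)=\min\{1,d_\alpha(z,X_\alpha\setminus V)\}$ and pull it back along $p_\alpha$ to a function on $X$; extend by Tietze--Urysohn to $\widetilde\psi_V:Y\to[0,1]$. Setting $e(p_\alpha^{-1}(V)):=\widetilde\psi_V^{-1}((0,1])$ on basic opens and extending by unions to all $U\in\tau_X$, the equality $e(U)\cap X=U$ is immediate. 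The disjointness property requires a coherent choice of the extensions $\widetilde\psi_V$: one exploits the openness of $p_\alpha$, which ensures $p_\alpha^{-1}(\overline V)=\overline{p_\alpha^{-1}(V)}$, so that disjoint $V_1,V_2$ in $X_\alpha$ lift to functions whose supports have disjoint closures in $X$, and these closures admit disjoint open neighbourhoods in $Y$ into which the extensions can be confined.

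For $(3)\Ra(2)$, assume the extension operator $e:\tau_X\to\tau_Y$ exists for every embedding, and apply it to an embedding of $X$ into a Tychonoff cube $K=[0,1]^\kappa$. The operator $e$ behaves as a disjointness-preserving lattice homomorphism from the open sets of $X$ into those of $K$. Restricting attention to countable subfamilies of coordinates of $K$ yields, for each countable $A\subset\kappa$, a factorisation of the projection $X\hookrightarrow K\to[0,1]^A$ through a metrizable compactum $X_A$ via a continuous surjection $q_A:X\to X_A$, and the openness of the bonding maps $q^B_A:X_B\to X_A$ for $A\subset B$ is forced by the disjointness preservation of $e$ (disjoint basic opens in $X_A$ have disjoint preimages that remain saturated under $q^B_A$). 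Assembling these into a $\sigma$-complete inverse spectrum whose limit is $X$ exhibits $X$ as openly generated.

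The main obstacle is precisely the disjointness preservation in $(2)\Ra(3)$: naively extending each $\psi_V\circ p_\alpha$ by Tietze--Urysohn independently will never yield disjoint extensions of disjoint open sets. The remedy is to work with a single $\kappa$-metric of $X$ (equivalently, the family of continuous pseudometrics coming from the spectrum), extending it as a continuous function of two variables---a point and a regular closed set---and exploiting the openness of the bonding maps to guarantee lower semi-continuity in the set variable. This lower semi-continuity, together with Tietze--Urysohn applied uniformly in the set variable, forces the required disjointness property of the resulting operator $e$.
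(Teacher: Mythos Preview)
The paper does not prove this theorem. It is stated as a known characterization due to Shirokov and \v S\v cepin, with references to \cite{Shi} and \cite{SCH}, and is then used as a black box in the proof of Theorem~\ref{main1}. There is therefore no ``paper's own proof'' to compare your attempt against.

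That said, a few remarks on your sketch. The overall architecture is the right one: the equivalence of $\kappa$-metrizability and open generation is \v S\v cepin's theorem, and the extension-operator characterization is Shirokov's contribution. However, your $(2)\Rightarrow(3)$ argument, as written, does not work. Defining $e$ on basic opens $p_\alpha^{-1}(V)$ and then ``extending by unions to all $U\in\tau_X$'' is ill-defined: an open set has many representations as a union of basic opens, and nothing forces the resulting $e(U)$ to be independent of the representation, nor does taking unions preserve the disjointness property you need. You correctly identify this obstacle at the end, but the proposed remedy --- extending a $\kappa$-metric as a function of a point and a regular closed set --- is precisely the nontrivial content of Shirokov's argument, and your paragraph does not actually carry it out. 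The phrase ``Tietze--Urysohn applied uniformly in the set variable'' hides the real work.

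Your $(3)\Rightarrow(2)$ direction is also too vague to be a proof: the claim that disjointness preservation of $e$ ``forces'' openness of the bonding maps $q_A^B$ needs an actual argument connecting the extension operator to the closure-preimage property that characterizes open maps, and the spectral assembly requires a closing-off argument over countable subsets of $\kappa$ that you have not supplied.

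In short: the paper treats this as a citation, and if you want to include a proof you should either cite \cite{Shi} and \cite{SCH} as the paper does, or fill in the substantial gaps above.
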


\section{Recognizing absolute $F$-valued retracts for some functional functors}\label{s4}

In this section we shall study the class $\AR[F]$ for functional functors $F$.
First we recall some properties of functionals.

Let $X$ be a compact space. We shall say that a functional $\mu:C(X)\to\IR$
\begin{itemize}
\item {\em preserves constants} if $\mu(c_X)=c$ for any constant function $c_X:X\to \{c\}\subset\IR$;
\item {\em preserves order} if $\mu(f)\le\mu(g)$ for any functions $f\le g$ in $C(X)$;
\item {\em weakly preserves order} if $\mu(a)\le\mu(f)\le\mu(b)$ for any function $f\in C(X)$ and constant functions $a,b\in C(X)$ with $a\le f\le b$;
\item {\em preserves minima} if $\mu(\min\{f,g\})=\min\{\mu(f),\mu(g)\}$ for any functions $f,g\in C(X)$;
\item {\em preserves maxima}  if $\mu(\max\{f,g\})=\max\{\mu(f),\mu(g)\}$ for any functions $f,g\in C(X)$;
\item {\em weakly preserves minima} if $\mu(\min\{f,c\})=\min\{\mu(f),\mu(c)\}$ for any $f\in C(X)$ and any constant function $c\in C(X)$;
\item {\em weakly preserves maxima} if $\mu(\max\{f,c\})=\max\{\mu(f),\mu(c)\}$ for any $f\in C(X)$ and any constant function $c\in C(X)$;
\item is {\em additive} if $\mu(f+g)=\mu(f)+\mu(g)$ for any functions $f,g\in C(X)$;
\item is {\em weakly additive} if $\mu(f+c)=\mu(f)+\mu(c)$ for any $f\in C(X)$ and  any constant function $c\in C(X)$;
\item is {\em weakly multiplicative} if $\mu(c\cdot f)=\mu(c)\cdot\mu(f)$ for any $f\in C(X)$ and  any constant function $c\in C(X)$;
\item is {\em $k$-Lipschitz} for $k\ge 1$ if  $|\mu(f)-\mu(g)|\le k\cdot \|f-g\|$ for any functions $f,g\in C(X)$.
\end{itemize}
Here for a function $f\in C(X)$ by $\|f\|=\sup_{x\in X}|f(x)|$ we denote its norm in the Banach space $C(X)$.

These properties of functionals allow us to define subfunctors of the functor $\IR^{C(\cdot)}$ assigning to each compact space $X$ the following closed subspaces of the functional space $\IR^{C(X)}$:
\begin{itemize}
\item $\overline{V}(X)=\{\mu\in \IR^{C(X)}:\mbox{$\mu$ preserves constants}\}$;
\item $V_{\ord}(X)=\{\mu\in \overline{V}(X):\mu\mbox{ preserves order}\}$;
\item $V_{\word}(X)=\{\mu\in\overline{V}(X):\mu\mbox{ weakly preserves order}\}$;
\item $V_{\min}(X)=\{\mu\in \overline{V}(X):\mu\mbox{ preserves minima}\}$;
\item $V_{\max}(X)=\{\mu\in \overline{V}(X):\mu\mbox{ preserves maxima}\}$;
\item $V_{\win}(X)=\{\mu\in \overline{V}(X):\mu\mbox{ weakly preserves minima}\}$;
\item $V_{\wax}(X)=\{\mu\in \overline{V}(X):\mu\mbox{ weakly preserves maxima}\}$;
\item $V_{+}(X)=\{\mu\in \overline{V}(X):\mu\mbox{ is additive}\}$;
\item $V_{\pm}(X)=\{\mu\in \overline{V}(X):\mu\mbox{ is weakly additive}\}$;
\item $V_{\h}(X)=\{\mu\in \overline{V}(X):\mu\mbox{ is weakly multiplicative}\}$;
\item $\Lip_k(X)=\{\mu\in \overline{V}(X):\mu\mbox{ is $k$-Lipschitz}\}$ for $k\ge 1$.
\end{itemize}

Many known functors can be written as intersections of the above functors. For example,
\begin{itemize}
\item $V=V_{\word}$ is the universal functor considered by T.Radul \cite{Rad1};
\item $P=V_{+}\cap V_{\h}\cap V_{\ord}$ is the functor of probability measures (see \cite{Fedmir});
\item $I=V_{\wa}\cap V_{\max}$ is the functor of idempotent measures (see \cite{Zar});
\item $O=V_{\wa}\cap V_{\ord}$ is the functor of weakly additive order-preserving functionals introduced by T.Radul \cite{Rad3};
\item $S=V_{\h}\cap V_{\wa}\cap V_{\ord}$ is the functor introduced and studied by V.Valov \cite{V};
\item $V_{\wax}\cap V_{\win}\cap V_{\ord}$ is isomorphic to the functor $G$ of growth hyperspaces (see \cite{Rad2});
\item $V_{\win}\cap V_{\max}\cap V_{\wa}$ and $V_{\min}\cap V_{\wax}\cap V_{\wa}$ are isomorphic to the hyperspace functor $\exp$ (see \cite{Rad4}).
\end{itemize}

Since the functors $V_{\win}\cap V_{\max}\cap V_{\wa}$ and $V_{\min}\cap V_{\wax}\cap V_{\wa}$ are isomorphic to the hyperspace functor $\exp$, Fedorchuk's description \cite{Fed} of the class $\AR[\exp]$ implies:

\begin{theorem} $\AR[V_{\win}\cap V_{\max}\cap V_{\wa}]=\AR[V_{\min}\cap V_{\wax}\cap V_{\wa}]=\AR[\exp]=\AbE[1]$.
\end{theorem}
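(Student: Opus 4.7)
The plan is to chain together two ingredients already in hand: the natural isomorphisms
$$V_{\win}\cap V_{\max}\cap V_{\wa}\;\cong\;\exp\;\cong\;V_{\min}\cap V_{\wax}\cap V_{\wa}$$
recalled immediately before the statement (attributed to \cite{Rad4}), together with Fedorchuk's theorem from \cite{Fed} identifying $\AR[\exp]$ with $\AbE[1]$. The only glue needed between them is the elementary observation that the class $\AR[\cdot]$ is invariant under natural isomorphism of functors $\Comp\to\Comp$.

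To establish that invariance lemma, I would note that a natural isomorphism $\eta:F\to F'$ provides natural transformations in both directions, so Proposition~\ref{natural} supplies both inclusions $\AR[F]\subset\AR[F']$ and $\AR[F']\subset\AR[F]$. The mild subtlety is that the definition of absolute $F$-valued retract demands the fiberwise condition $r(x)\in F(\{x\})\subset FX$ for each $x\in X$. This is handled by naturality of $\eta$ with respect to the inclusion $\{x\}\hookrightarrow X$: the naturality square forces $\eta_X$ to carry $F(\{x\})\subset FX$ bijectively onto $F'(\{x\})\subset F'X$, so post-composing a witness $r:Y\to FX$ with $\eta_X$ produces a witness $Y\to F'X$ of the same kind, and conversely with $\eta_X^{-1}$.

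With the invariance lemma in hand, two applications to the displayed isomorphisms yield
$$\AR[V_{\win}\cap V_{\max}\cap V_{\wa}]\;=\;\AR[\exp]\;=\;\AR[V_{\min}\cap V_{\wax}\cap V_{\wa}],$$
and the remaining equality $\AR[\exp]=\AbE[1]$ is exactly the content of Fedorchuk's result. I do not anticipate any real obstacle: the substantive mathematical work is entirely absorbed in the two cited references, and the only thing that might look like a subtle point --- the fiberwise condition in the definition of $\AR[F]$ --- is dispatched cleanly by naturality of $\eta$ at singleton inclusions.
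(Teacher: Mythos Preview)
Your proposal is correct and follows exactly the route the paper takes: the paper simply states (in the sentence immediately preceding the theorem) that the result follows from the isomorphisms of \cite{Rad4} together with Fedorchuk's description \cite{Fed} of $\AR[\exp]$, without writing out a formal proof. Your explicit invocation of Proposition~\ref{natural} to justify the invariance of $\AR[\cdot]$ under natural isomorphism, and your check that naturality at singleton inclusions handles the fiberwise condition $r(x)\in F(\{x\})$, are a careful elaboration of what the paper leaves implicit.
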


The following description of the class $\AR[S]$ for the functor $S=\overline{V}\cap V_{\h}\cap V_{\wa}\cap V_{\ord}$ was obtained by V.Valov in \cite{V}.

\begin{theorem}[Valov]\label{valovS} For the functor $S=V_{\h}\cap V_{\wa}\cap V_{\ord}$ the class $\AR[S]$ coincides with the class $\OG$ of openly generated compacta.
\end{theorem}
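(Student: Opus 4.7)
The plan is to prove both inclusions via the Shirokov--\v S\v cepin characterization (Theorem~\ref{shi}): for any embedding $X\subset Y$ of an openly generated $X$ there is an extension operator $e:\tau_X\to\tau_Y$ with $e(U)\cap X=U$ and $e(U)\cap e(V)=\emptyset$ for disjoint open $U,V\subset X$.

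For the inclusion $\OG\subset\AR[S]$: Let $X$ be openly generated, $X\subset Y$ compact, and fix such an $e$; by replacing $e(U)$ with $\bigcup_{V\subseteq U}e(V)\cup (Y\setminus X)$ we may assume that $e$ is monotone and that $e(X)=Y$. For every $y\in Y$ and $\varphi\in C(X)$ set
$$r(y)(\varphi)=\sup\{t\in\IR:y\in\overline{e(\{x\in X:\varphi(x)>t\})}\}.$$
Then $r(y)$ preserves constants (using $e(X)=Y$ and $e(\emptyset)=\emptyset$), is order-preserving (by the monotonicity of $e$) and weakly additive (the identity $\{\varphi+c>t\}=\{\varphi>t-c\}$ shifts the supremum by $c$). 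Weak multiplicativity by a nonnegative scalar is immediate from $\{c\varphi>t\}=\{\varphi>t/c\}$; multiplicativity by $-1$ amounts to identifying the supremum above with the symmetric infimum built from sub-level sets, which follows from the fact that both values are determined by the unique jump threshold of the monotone function $t\mapsto [y\in\overline{e(\{\varphi>t\})}]$. Continuity of $y\mapsto r(y)(\varphi)$ is immediate because $e(\{\varphi>t\})$ is open, and the identity $r(x)=\delta_x$ for $x\in X$ follows from $e(U)\cap X=U$.

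For the inclusion $\AR[S]\subset\OG$: Let $r:Y\to SX$ be a continuous retraction witnessing $X\in\AR[S]$. For each open $U\subset X$ define
$$e(U)=\{y\in Y:\exists\,\varphi\in C(X,[0,1]),\ \varphi|_{X\setminus U}\equiv 0\text{ and }r(y)(\varphi)>0\}.$$
Then $e(U)$ is open in $Y$, and the identity $e(U)\cap X=U$ follows at once from $r(x)=\delta_x$, which converts the defining condition to $\varphi(x)>0$. The essential task is to check disjointness: assuming $U\cap V=\emptyset$ and $y\in e(U)\cap e(V)$, pick witnesses $\varphi,\psi\in C(X,[0,1])$ with $\varphi|_{X\setminus U}\equiv 0$, $\psi|_{X\setminus V}\equiv 0$ and $r(y)(\varphi),r(y)(\psi)>0$. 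Then $\varphi\cdot\psi\equiv 0$, so $\max(\varphi,\psi)=\varphi+\psi\le 1$; combining weak additivity (which gives $r(y)(1-\varphi-\psi)=1-r(y)(\varphi+\psi)$ via $\mu(-f)=-\mu(f)$), order preservation (forcing $r(y)(\varphi+\psi)\ge\max\{r(y)(\varphi),r(y)(\psi)\}$), and weak multiplicativity applied to rescalings of $\varphi,\psi$ chosen by a Urysohn construction separating $X\setminus U$ from $X\setminus V$, one extracts the required contradiction.

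The main obstacle lies precisely in the disjointness step for the second inclusion. Weak multiplicativity gives only $\mu(c\varphi)=c\,\mu(\varphi)$ for a \emph{constant} $c$, so the vanishing $\varphi\psi\equiv 0$ does not translate directly into $\mu(\varphi)\mu(\psi)=0$. The genuine argument of Valov~\cite{V} sidesteps this by choosing the witnesses $\varphi,\psi$ as normalized Urysohn functions that are constant outside disjoint neighborhoods of the complements of $U$ and $V$, and then uses the \emph{simultaneous} action of weak additivity, weak multiplicativity, and order preservation on these very special functions to reduce the question to an elementary incompatibility between the scalar values $r(y)(\varphi)$ and $r(y)(\psi)$. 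This is the genuinely new input that distinguishes $S$ from the functor $P$ of probability measures (which produces only the smaller class $\AbE[0]$).
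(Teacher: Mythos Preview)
The paper does not itself prove this theorem; it attributes both inclusions to Valov \cite{V} and then remarks that the inclusion $\AR[S]\subset\OG$ can alternatively be derived from the general Theorem~\ref{main1}. That derivation is one line: every $\mu\in SX$, being order-preserving and weakly additive, is $1$-Lipschitz (from $f\le g+\|f-g\|$ one gets $\mu(f)\le\mu(g)+\|f-g\|$), so the supremum in the hypothesis of Theorem~\ref{main1} is at most $1<2$.

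Your argument for $\AR[S]\subset\OG$ is heading in the same direction as Theorem~\ref{main1} but does not isolate the decisive estimate. In the paper's proof of Theorem~\ref{main1}, one takes witnesses $f_U,f_V:X\to[0,1]$ with $\mu(f_U)>1-\e$ and $\mu(-f_V)<-1+\e$; then $f_U\cdot(-f_V)=0$ and $\|f_U-(-f_V)\|=\|f_U+f_V\|\le 1$, so the $1$-Lipschitz bound gives $|\mu(f_U)-\mu(-f_V)|\le 1$, contradicting $\mu(f_U)-\mu(-f_V)>2-2\e$. No special Urysohn constructions, rescalings, or ``simultaneous'' use of weak multiplicativity are needed. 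Your own text concedes that you have not closed the disjointness step; the Lipschitz inequality is exactly the missing ingredient.

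For $\OG\subset\AR[S]$ the paper offers nothing to compare against, but your sketch has real gaps. First, the identity $\mu(-f)=-\mu(f)$ does not come from weak additivity as you write; it comes from weak multiplicativity ($\mu(-1)=-1$). Second, your ``unique jump threshold'' argument for $r(y)(-\varphi)=-r(y)(\varphi)$ needs that $y$ cannot lie simultaneously in $\overline{e(\{\varphi>t\})}$ and $\overline{e(\{\varphi<s\})}$ when $s<t$; the operator $e$ only gives disjointness of the \emph{open} sets $e(\{\varphi>t\})$ and $e(\{\varphi<s\})$, and closures of disjoint open sets may well meet. Third, continuity of $y\mapsto r(y)(\varphi)$ is not ``immediate'': your formula involves closures, and the level sets $\{y:r(y)(\varphi)>s\}=\bigcup_{t>s}\overline{e(\{\varphi>t\})}$ are a priori only $F_\sigma$, not open. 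These issues would have to be addressed before the construction yields a continuous map into $SX$.
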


In fact, the inclusion $\AR[S]\subset\OG$ can be derived from the following general theorem.

\begin{theorem}\label{main1} A compact space $X$ is openly generated if $X\in\AR[F]$ for some functional functor $F$ such that $$FX\subset \big\{\mu\in \overline{V}(X): \sup\{|\mu(f)-\mu(g)|:f,g\in C(X),\;f\cdot g=0,\; \|f-g\|\le 1\}<2\big\}.$$
\end{theorem}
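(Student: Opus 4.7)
The plan is to apply the Shirokov--\v S\v cepin characterization in Theorem~\ref{shi}: it suffices, for every embedding of $X$ into a compact Hausdorff space $Y$, to produce an operator $e : \tau_X \to \tau_Y$ with $e(U) \cap X = U$ and $e(U) \cap e(V) = \emptyset$ whenever $U \cap V = \emptyset$. Fix an embedding $X \subset Y$. Since $X \in \AR[F]$, choose a continuous $r \colon Y \to FX$ with $r(x) \in F(\{x\}) \subset FX$ for each $x \in X$; as the Dirac subfunctor is contained in $F$ and $\overline{V}(\{x\})$ is a singleton, this forces $r(x) = \delta_X(x)$, so $r(x)(\varphi) = \varphi(x)$ for all $\varphi \in C(X)$. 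For an open $U \subset X$ I propose
\[
e(U) \;:=\; \bigl\{\, y \in Y : \exists\, f \in C(X),\ -1 \le f \le 1,\ \supp f \subset U,\ r(y)(f) - r(y)(-f) > c_{r(y)}\,\bigr\},
\]
where $c_\mu := \sup\{|\mu(\varphi) - \mu(\psi)| : \varphi,\psi \in C(X),\ \varphi\cdot\psi = 0,\ \|\varphi-\psi\| \le 1\}$ is the quantity the hypothesis bounds strictly below $2$ for each $\mu \in FX$.

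I expect the intersection-with-$X$ and disjointness conditions to follow directly. For the Dirac verification one first checks $c_{\delta_x} = 1$: a Urysohn-type $f$ with $f(x) = 1$ and $\supp f \subset U$ then gives $\delta_x(f) - \delta_x(-f) = 2 > 1 = c_{\delta_x}$, proving $U \subset e(U) \cap X$; conversely for $x \notin U$ every admissible $f$ satisfies $f(x) = 0$ and so $\delta_x(f) - \delta_x(-f) = 0$, excluding $x$ from $e(U)$.

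For disjointness, suppose $y \in e(U) \cap e(V)$ with witnesses $f_1, f_2$ supported in disjoint $U, V$. Because $\supp f_1 \cap \supp f_2 = \emptyset$, the pairs $(f_1, -f_2)$ and $(-f_1, f_2)$ both satisfy $\varphi \cdot \psi = 0$ together with $\|\varphi - \psi\| = \|f_1 + f_2\| \le 1$, so the hypothesis delivers $\mu(-f_2) \ge \mu(f_1) - c$ and $\mu(-f_1) \ge \mu(f_2) - c$, where $\mu := r(y)$ and $c := c_\mu$. Combined with the defining strict inequalities $\mu(f_i) - \mu(-f_i) > c$, these bounds force $\mu(f_2) - c \le \mu(-f_1) < \mu(f_1) - c$ and the symmetric $\mu(f_1) - c \le \mu(-f_2) < \mu(f_2) - c$, producing $\mu(f_2) < \mu(f_1) < \mu(f_2)$, a contradiction.

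The hard step --- and the main obstacle --- is proving that $e(U)$ is open. The function $y \mapsto c_{r(y)}$ is merely lower semicontinuous on $Y$ (being the supremum of the continuous functions $y \mapsto |r(y)(\varphi) - r(y)(\psi)|$), so the strict inequality $r(y_0)(f) - r(y_0)(-f) > c_{r(y_0)}$ need not persist on a neighborhood of $y_0$: $c_{r(y)}$ can jump upward. The plan is to reparametrize by an extra positive margin $\eta$, presenting $e(U)$ as the union, over all $\eta > 0$ and all admissible $f$, of intersections of the open sets $\{y : r(y)(f) - r(y)(-f) > 2 - \eta\}$ with open sets engineered from the level decomposition of $c_{r(y)}$, and then to show that this union still contains $X \cap U$ and admits the same disjointness derivation. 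The crucial structural input making this feasible is that every Dirac $\delta_x$ with $x \in U$ saturates the condition with a full unit of slack ($\delta_x(f) - \delta_x(-f) = 2$ against $c_{\delta_x} = 1$), so the strict inequality $c_\mu < 2$ on $FX$ can be quantitatively exploited to absorb the LSC discrepancy. Carrying out this reparametrization so as to yield genuine openness while preserving the disjointness derivation is the main technical difficulty.
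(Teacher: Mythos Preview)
Your overall strategy---invoke the Shirokov--\v S\v cepin characterization (Theorem~\ref{shi}) and build the extension operator from Urysohn-type witnesses supported in $U$---is exactly the paper's, and your verifications of $e(U)\cap X=U$ and of the disjointness condition are correct and essentially match the paper's computations. The gap is precisely where you locate it: the openness of $e(U)$. But this difficulty is self-inflicted by your choice of the \emph{variable} threshold $c_{r(y)}$.

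The paper avoids the issue entirely by fixing, once and for all, a single $\varepsilon>0$ with
\[
2-2\varepsilon>\sup\bigl\{|\mu(f)-\mu(g)|:\mu\in FX,\ f\cdot g=0,\ \|f-g\|\le 1\bigr\},
\]
and then (working in $FX$ rather than directly in $Y$) setting
\[
e(U)=\bigl\{\mu\in FX:\exists f\in C(X,[0,1]),\ \supp f\subset U,\ \mu(f)>1-\varepsilon,\ \mu(-f)<-1+\varepsilon\bigr\}.
\]
This is open in $FX$ trivially, as a union over $f$ of the open slabs $\{\mu:\mu(f)>1-\varepsilon\}\cap\{\mu:\mu(-f)<-1+\varepsilon\}$; one then pulls back by the continuous $r:Y\to FX$ to obtain $r^{-1}\circ e:\tau_X\to\tau_Y$. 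Your disjointness derivation goes through verbatim with the fixed bound $2-2\varepsilon$ in place of $c$. No ``reparametrization by an extra margin $\eta$'' or ``level decomposition of $c_{r(y)}$'' is needed: the margin \emph{is} the single $\varepsilon$, chosen uniformly in advance.

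One subtlety you implicitly surface is genuine: the hypothesis, read literally, is pointwise ($c_\mu<2$ for each $\mu\in FX$), whereas the argument uses a uniform bound, and since $\mu\mapsto c_\mu$ is only lower semicontinuous, compactness of $FX$ alone does not upgrade pointwise to uniform. The paper's proof tacitly reads the displayed hypothesis as furnishing the uniform bound (that is how the single $\varepsilon$ is selected), and in every application---most notably $F\subset\Lip_k$ with $k<2$, where $c_\mu\le k$ for all $\mu$---the uniform bound is immediate.
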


\begin{proof} Fix any compact space $X\in\AR[F]$. If $X$ is empty, then there is nothing to prove. So, we assume that $X$ is not empty. To show that $X$ is openly generated, we shall apply the characterization Theorem \ref{shi}. Embed $X$ into any compact Hausdorff space $Y$. Since $X$ is an absolute $F$-valued retract, there is a map $r:Y\to FX$ such that $r(x)\in F(\{x\})\subset FX$ for every $x\in X$. This implies that $F(\{x\})\subset\overline{V}(x)$ is not empty and hence coincides with the singleton $\overline{V}(\{x\})=\{\delta_X(x)\}$. This allows us to identify $X$ with the closed subset $\{\delta_X(x):x\in X\}\subset FX\subset\overline{V}(X)$ consisting of Dirac measures.

First we construct an extension operator $e:\tau_X\to \tau_{FX}$. Choose a positive $\e>0$ such that $$2-2\e>\sup\{|\mu(f)-\mu(g)|:f,g\in C(X),\;fg=0,\; \|f-g\|\le 1\}.$$

Given an open set $U\subset X$ consider the open subset $e(U)$ of $FX$ that consists of all functionals $\mu\in FX\subset \IR^{C(X)}$ for which there is a continuous function $f:X\to[0,1]$ such that $\mu(f)>1-\e$, $\mu(-f)<-1+\e$ and $\supp(f)=f^{-1}(0,1]\subset U$. The complete regularity of $X$ implies that $e(U)\cap X=U$.

We claim that $e(U)\cap e(V)=\emptyset$ for any disjoint open sets $U,V\subset X$. Assuming the converse, we can find a functional $\mu\in e(U)\cap e(V)$ and two continuous functions $f_U,f_V:X\to[0,1]$ such that
\begin{enumerate}
\item $\supp(f_U)\subset U$, $\supp(f_V)\subset V$;
\item $\mu(f_U)>1-\e$ and $\mu(-f_V)<-1+\e$.
\end{enumerate}

The condition (1) implies that $f_U\cdot (-f_V)=0$, $\|f_U+f_V\|\le 1$ and hence $$|\mu(f_U)-\mu(-f_V)|\le 2-2\e$$by the choice of $\e$.
On the other hand, the condition (2) yields that $|\mu(f_U)-\mu(-f_V)|>2-2\e$ and this is a desired contradiction.

The operator $e:\tau_X\to\tau_{FX}$ and the map $r:Y\to FX$ generate the operator $$r^{-1}\circ e:\tau_X\to\tau_Y,\;\; r^{-1}\circ e:U\mapsto r^{-1}(e(U)),$$ which has two properties from Theorem~\ref{shi}. This theorem implies that the space $X$ is openly generated.
\end{proof}

Combining Theorem~\ref{main1} with Valov's Theorem~\ref{valovS}, we get:

\begin{corollary}\label{c4} For any $k\in[1,2)$ and any subfunctor $F\subset \Lip_k$ we get $\AR[F]\subset\OG$. If, moreover, $V_{\h}\cap V_{\wa}\cap V_{\ord}\subset F$, then $\AR[F]=\OG$.
\end{corollary}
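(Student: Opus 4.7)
The plan is to derive the corollary in two pieces, combining Theorem~\ref{main1} (for the forward inclusion $\AR[F] \subset \OG$) with Valov's Theorem~\ref{valovS} and Proposition~\ref{natural} (for the reverse inclusion $\OG \subset \AR[F]$ under the extra hypothesis).

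For $\AR[F] \subset \OG$, I would verify the hypothesis of Theorem~\ref{main1} for a subfunctor $F \subset \Lip_k$ with $k < 2$. Fix any compact space $X$ and any functional $\mu \in FX \subset \Lip_k(X)$. The $k$-Lipschitz condition directly yields $|\mu(f) - \mu(g)| \le k \cdot \|f - g\| \le k$ whenever $\|f - g\| \le 1$, irrespective of the disjointness condition $f \cdot g = 0$. Hence
$$\sup\{|\mu(f) - \mu(g)| : f, g \in C(X),\ f \cdot g = 0,\ \|f - g\| \le 1\} \le k < 2,$$
which places $FX$ inside the class of functionals described in the hypothesis of Theorem~\ref{main1}. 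That theorem immediately yields $\AR[F] \subset \OG$.

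For the reverse inclusion under the assumption $S := V_{\h} \cap V_{\wa} \cap V_{\ord} \subset F$, the idea is to read the subfunctor inclusion as a natural transformation $S \hookrightarrow F$: each component is the identity inclusion $SX \hookrightarrow FX$ inside $\IR^{C(X)}$, and since for each continuous $f$ both $Sf$ and $Ff$ are restrictions of the same second-dual operator $f^{**}$, the naturality squares commute tautologically. Proposition~\ref{natural} then gives $\AR[S] \subset \AR[F]$, and Valov's Theorem~\ref{valovS} identifies $\AR[S]$ with $\OG$. So $\OG \subset \AR[F]$, and together with the first part this yields the equality $\AR[F] = \OG$.

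Neither step presents any serious obstacle. The Lipschitz estimate is so strong that the algebraic hypothesis $f \cdot g = 0$ in Theorem~\ref{main1} plays no role in the verification, and the naturality of a subfunctor inclusion of functional functors is essentially a formality. All the technical work has already been absorbed into Theorem~\ref{main1} and Valov's Theorem~\ref{valovS}; the corollary is just the bookkeeping step that combines them.
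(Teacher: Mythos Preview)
Your proof is correct and follows exactly the approach the paper intends: the paper's own proof is the single sentence ``Combining Theorem~\ref{main1} with Valov's Theorem~\ref{valovS}, we get,'' and you have simply unpacked this by verifying the Lipschitz bound gives the hypothesis of Theorem~\ref{main1} and by invoking Proposition~\ref{natural} for the subfunctor inclusion $S\subset F$. The only point you might make explicit is that Theorem~\ref{main1} is stated for \emph{functional} functors (subfunctors of $\IR^{C(\cdot)}$ containing the Dirac functor), but this is automatic here since $\Lip_k$ is functional and any $X\in\AR[F]$ forces $F(\{x\})\ne\emptyset$, hence $F(\{x\})=\{\delta_x\}$; the paper itself takes this for granted.
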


This corollary implies the following result first established by Valov \cite{V}.

\begin{corollary} The functor $O=V_{\wa}\cap V_{\ord}$ of weakly additive order-preserving functionals has $\AR[O]=\OG$.
\end{corollary}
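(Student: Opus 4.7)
The plan is to deduce this corollary as an immediate application of Corollary~\ref{c4}. Two verifications are needed: that $O$ embeds into $\Lip_k$ for some $k \in [1,2)$, and that $O$ contains the functor $V_{\h} \cap V_{\wa} \cap V_{\ord}$ as a subfunctor. The second is trivial since $O = V_{\wa} \cap V_{\ord}$ is obtained by dropping the weak multiplicativity condition from $V_{\h} \cap V_{\wa} \cap V_{\ord}$, so $V_{\h} \cap V_{\wa} \cap V_{\ord} \subset V_{\wa} \cap V_{\ord} = O$ by definition. All the work therefore goes into the first step.

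To show $O \subset \Lip_1$, I would prove that every constant-preserving, weakly additive, order-preserving functional $\mu \in O(X)$ is automatically $1$-Lipschitz. Given $f, g \in C(X)$ with $c = \|f - g\|$, one has the pointwise inequalities $g - c \le f \le g + c$, where $c$ also denotes the corresponding constant function. Applying order preservation gives $\mu(g - c) \le \mu(f) \le \mu(g + c)$, and weak additivity together with the constant-preserving property (built into $\overline V$) yields $\mu(g \pm c) = \mu(g) \pm c$. Combining these estimates produces $|\mu(f) - \mu(g)| \le c = \|f - g\|$, establishing the inclusion $O \subset \Lip_1 \subset \Lip_k$ for any $k \in [1,2)$.

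With both hypotheses of Corollary~\ref{c4} verified for $F = O$, the corollary delivers simultaneously the inclusions $\AR[O] \subset \OG$ and $\OG \subset \AR[O]$, giving $\AR[O] = \OG$. I do not anticipate any obstacle here: the argument is a direct bookkeeping verification that the defining properties of $O$ imply the $1$-Lipschitz condition, after which the heavy lifting (Theorem~\ref{main1} and Valov's Theorem~\ref{valovS}) has already been performed in the proof of Corollary~\ref{c4}.
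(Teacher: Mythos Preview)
Your proposal is correct and follows exactly the route the paper intends: the corollary is stated immediately after Corollary~\ref{c4} as a direct consequence, and you supply precisely the two verifications needed to invoke it, including the standard argument that weak additivity plus order preservation forces the $1$-Lipschitz condition. There is nothing to add.
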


It is interesting to note that for $k\ge 3$ Corollary~\ref{c4} does not hold because the class $\AR[\Lip_3]$ contains some scattered compact spaces, which are not openly generated.

Let us recall that a topological space $X$ is called {\em scattered} if each non-empty subspace $A$ of $X$ contains an isolated point. For a subspace $A$ of $X$ by $A^{(1)}$ we denote the set of non-isolated points of $A$ and for every ordinal $\alpha$ define the $\alpha$-th derived set $X^{(\alpha)}$ of $X$ letting $X^{(0)}=X$, $X^{(1)}$ be the set of non-isolated points of $X$ and $$X^{(\alpha)}=\bigcap_{\beta<\alpha}(X^{(\beta)})^{(1)}$$for $\alpha>1$.
For a scattered space $X$ the transfinite sequence $(X_\alpha)_{\alpha}$ is strictly decreasing, so $X^{(\alpha)}=\emptyset$ for some $\alpha$. The {\em scattered height} $\sht(X)$ of $X$ is the  smallest ordinal $\alpha$ such that the $\alpha$th derived set $X^{(\alpha)}$ is finite. For example, an infinite compact space $X$ with a unique non-isolated point has scattered height $\sht(X)=1$.

A topological space $X$ is called {\em hereditarily paracompact} if each subspace of $X$ is paracompact. By \cite{BL}, a topological space is compact scattered and hereditarily paracompact if and only if it belongs to the smallest class of spaces containing the singletons and closed under taking one-point compactifications of topological sums of spaces from the class.


\begin{theorem}\label{main2} Each hereditarily paracompact scattered compact space $X$ of finite scattered height $s=\sht(X)$ with $|X^{(s)}|=1$ is an absolute $F_s$-valued retract for the functor $F_s=V_{\h}\cap V_{\wa}\cap V_{\word}\cap\Lip_{2^{s+1}-1}$.
\end{theorem}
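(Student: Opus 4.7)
The plan is to induct on the scattered height $s=\sht(X)$. The base case $s=0$ is immediate: the singleton condition forces $X$ to be a one-point space, so $F_0X$ is a singleton and every $Y\supset X$ admits the constant retraction. For the inductive step, invoke the structural description of hereditarily paracompact compact scattered spaces in \cite{BL} to write $X$ as the one-point compactification (at the unique point $*\in X^{(s)}$) of a topological sum $\bigsqcup_\beta X_\beta$ of clopen compact subspaces of strictly smaller scattered height. By subdividing each $X_\beta$ into finitely many clopen pieces isolating the points of its top derived set, I may assume every summand itself has singleton top derived set, so the inductive hypothesis applies to each $X_\beta$ with Lipschitz constant $k_{s-1}\le 2^s-1$.

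Given an embedding $X\subset Y$, the inductive hypothesis yields continuous retractions $r_\beta:Y\to F_{\sht(X_\beta)}X_\beta\subset F_{s-1}X$ with $r_\beta|_{X_\beta}=\delta_X|_{X_\beta}$. Tietze--Urysohn then furnishes continuous weight functions $h_\beta:Y\to[0,1]$ with $h_\beta|_{X_\beta}\equiv 1$ and $h_\beta$ vanishing on an open neighborhood of $\{*\}\cup\bigcup_{\gamma\ne\beta}X_\gamma$. The target retraction $r:Y\to F_sX$ is then assembled by an affine-in-$\varphi$ formula of the shape
$$r(y)(\varphi)\;=\;\varphi(*)\;+\;\sum_\beta h_\beta(y)\bigl(r_\beta(y)(\varphi)-\varphi(*)\bigr),$$
the sum converging pointwise because every $\varphi\in C(X)$ satisfies $\sup_{X_\beta}|\varphi-\varphi(*)|\to 0$ off finite index sets of $\beta$. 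The identity $r(x)=\delta_X(x)$ for $x\in X$ is immediate from the construction (at $x\in X_\beta$ only the $\beta$-th summand survives and produces $\varphi(x)$, and at $x=*$ every weight vanishes), while weak additivity and weak multiplicativity of $r(y)$ follow formally from the affine form of the formula together with the corresponding properties of each $r_\beta(y)$ and of the Dirac functional $\varphi\mapsto\varphi(*)$.

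The principal obstacle is to verify weak order preservation and the announced Lipschitz bound. A direct triangle estimate yields
$$|r(y)(\varphi)-r(y)(\psi)|\;\le\;\Bigl(1+(k_{s-1}+1)\sum_\beta h_\beta(y)\Bigr)\|\varphi-\psi\|,$$
so the doubling recursion $k_s\le 2k_{s-1}+1$ will unwind from $k_0=1$ to the announced $k_s=2^{s+1}-1$ as soon as the total weight $\sum_\beta h_\beta(y)$ is suitably controlled on $Y$, and weak order preservation reduces to the companion requirement $r(y)(\varphi)\in[\inf\varphi,\sup\varphi]$, which if needed can be enforced by truncating at $\inf\varphi$ and $\sup\varphi$, an operation that preserves weak additivity and weak multiplicativity. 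Arranging the weights $h_\beta$ inside the not-necessarily-paracompact ambient space $Y$---so that $h_\beta\equiv 1$ on $X_\beta$ while the total weight stays pointwise bounded---is the technical heart of the argument, and will rely on hereditary paracompactness of $X$ to obtain a locally finite refinement of the family $\{X_\beta\}$ inside $X$, together with normality of $Y$ applied to the finite pieces of this refinement (every open neighborhood of $*$ in $X$ already contains all but finitely many $X_\beta$) to transport the refinement to pairwise disjoint opens in $Y$.
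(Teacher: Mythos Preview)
Your inductive framework and the decomposition of $X\setminus\{*\}$ into clopen pieces $X_\beta$ match the paper's setup exactly. The genuine gap is in what you call ``the technical heart'': arranging continuous weights $h_\beta:Y\to[0,1]$ with $h_\beta|_{X_\beta}\equiv 1$ and $\sum_\beta h_\beta$ uniformly bounded on an \emph{arbitrary} compact $Y\supset X$. This cannot be done in general. Take $X=\alpha D$, the one-point compactification of an uncountable discrete space $D$, embedded in $Y=[0,1]^D$ via $x_d\mapsto e_d$, $*\mapsto 0$. Any continuous $h_d$ with $h_d(e_d)=1$ is $>1/2$ on a basic open set $U_d$ depending on a finite coordinate set $F_d\ni d$. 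A $\Delta$-system argument produces an uncountable $D'\subset D$ with the $F_d\setminus R$ pairwise disjoint, and then a single point $p$ (agreeing with $e_d$ on each $F_d\setminus R$ and with $0$ on the root $R$) lies in every $U_d$, $d\in D'$, forcing $\sum_d h_d(p)=\infty$. So your sum does not even converge, let alone stay bounded; hereditary paracompactness of $X$ and normality of $Y$ do not help, and this is exactly the obstruction encoded in Theorem~\ref{shi} for non--openly-generated $X$---which are the whole point of the theorem.

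The paper avoids this obstruction by two moves you are missing. First, it does not work in $Y$ at all: since $VX=\prod_{\varphi}[\min\varphi,\max\varphi]$ is an absolute retract, it suffices to build a retraction $p_X:VX\to F_sX$ with $p_X\circ\delta_X=\delta_X$, and then precompose with any extension $Y\to VX$ of $\delta_X$. The weights become the intrinsic quantities $\mu(\chi_A)\in[0,1]$ for $\mu\in VX$, with no summability needed. Second---and this is the key idea---the paper replaces your sum by the nonlinear combination
\[
p_X(\mu)(\varphi)=\varphi(*)+\sup_{A}\mu(\chi_A)\,\mu_A(\varphi|A-\varphi(*))+\inf_{A}\mu(\chi_A)\,\mu_A(\varphi|A-\varphi(*)),
\]
with the convention that the empty piece contributes $0$ so that $\sup\ge 0\ge\inf$. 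At a Dirac measure only one piece survives and the formula returns $\varphi(x)$; weak additivity, weak multiplicativity, and weak order preservation follow directly without truncation; and since $\sup$ and $\inf$ each pick out a \emph{single} extremal piece, the Lipschitz estimate gives $1+(2^s-1)+(2^s-1)=2^{s+1}-1$, which is precisely the announced doubling recursion.
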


\begin{proof} Since $V_{\word}X=VX=\prod_{f\in C(X)}[\min f,\max f]$ is an absolute retract, it suffices to construct a continuous map $p_X:VX\to F_{s}(X)$ such that $p_X\circ\delta_X=\delta_X$. The existence of such map $p_X$ will be proved by induction on the scattered height of $s=\sht(X)$. If $s=0$, then $X^{(0)}=X$ is a singleton and so is the space $F_{0}X$. Then the constant map $r_X:VX\to F_0X$ has the desired property: $p_X\circ\delta_X=\delta_X$.

Now we assume that for some $s\ge 1$ the existence of a map $p_K:VK\to F_{\sht(K)}K$ with $p_K\circ\delta_K=\delta_K$ has been proved for all hereditarily paracompact scattered compacta $K$ such that $\sht(K)<s$ and $|K^{(\sht(K))}|=1$. Assume that $X$ is a hereditarily paracompact scattered compact space of scattered height $\sht(X)=s$ and $X^{(s)}$ is a singleton $\{*\}$. By a result of Telgarsky \cite{Telga}, the scattered space $X\setminus\{*\}$, being paracompact, is strongly zero-dimensional, which allows us to find a disjoint cover $\A$ of $X\setminus\{*\}$ by non-empty compact open subsets of $X\setminus\{*\}$.
Each space $A\in\A$ has scattered height $\sht(A)<\sht(X)=s$. Decomposing $A$ into a disjoint finite union of open subsets, we can additionally assume that $A^{(\sht(A))}$ is a singleton for every $A\in\A$. The inductive assumption yields a continuous map  $p_A:VA\to F_{\sht(A)}(A)\subset F_{s-1}(A)$ such that $p_A\circ\delta_A=\delta_A$. Let $\chi_A:X\to\{0,1\}$ denote the characteristic function of the set $A$ (which means that $A=\chi^{-1}_A(1)$). Fix any retraction $r_A:X\to A$ and consider the retraction $Vr_A:VX\to VA$. For a functional $\mu\in VX$ put $\mu_A=p_A\circ Vr_A(\mu))\in F_{s-1}(A)$. Let $\A^\circ=\A\cup\{\emptyset\}$.

Define the map $p_X:VX\to \overline{V}X$ assigning to each functional $\mu\in VX$ the functional
$p_X(\mu)\in \overline{V}X$, which assigns to each function $\varphi\in C(X)$ the real number
$$p_X(\mu)(\varphi)=\varphi(*)+\sup_{A\in\A^\circ}\mu(\chi_A)\cdot \mu_A\big(\varphi|A-\varphi(*)\big)+
\inf_{A\in\A^\circ}\mu(\chi_A)\cdot \mu_A\big(\varphi|A-\varphi(*)\big).$$
In this formula for $A=\emptyset$ we assume that $\mu(\chi_A)\cdot \mu_A\big(\varphi|A-\varphi(*)\big)=0$, which implies that
$$\sup_{A\in\A^\circ}\mu(\chi_A)\cdot \mu_A\big(\varphi|A-\varphi(*)\big)=\max_{A\in\A^\circ}\mu(\chi_A) \cdot \mu_A\big(\varphi|A-\varphi(*)\big)\ge0$$ and
$$\inf_{A\in\A^\circ}\mu(\chi_A)\cdot \mu_A\big(\varphi|A-\varphi(*)\big)=\min_{A\in\A^\circ}\mu(\chi_A) \cdot \mu_A\big(\varphi|A-\varphi(*)\big)\le0.$$

The definition of the functional $p_X(\mu)$ implies that it is weakly additive. The weak multiplicativity of the functionals $\mu_A$, $A\in\A$, implies the weak multiplicativity of the functional $p_X(\mu)$. Next, we prove that the functional $p_X(\mu)$ weakly preserves order.
Indeed,
$$
\begin{aligned}
p_X(\mu)(\varphi)&=\varphi(*)+\sup_{A\in\U}\mu(\chi_A)\cdot\mu_A\big(\varphi|A-\varphi(*)\big)+
\inf_{A\in\U}\mu(\chi_A)\cdot\mu_A\big(\varphi|A-\varphi(*)\big)\ge\\
&\ge \varphi(*)+0+\inf_{A\in \U}\mu(\chi_A)\big(\min \varphi|A-\varphi(*)\big)\ge \min\varphi|A\ge\min\varphi.
\end{aligned}
$$By analogy we can prove that $p_X(\mu)(\varphi)\le\max\varphi$. So, $p_X(\mu)\in (V_\h\cap V_{\wa}\cap V_{\word})(X)$.

Finally, let us check that $p_X(\mu)\in \Lip_{2^{s+1}-1}(X)$. Fix any two functions $\varphi,\psi\in C(X)$ and find (possibly empty) sets $A^\varphi,A_\varphi,A^\psi,A_\psi\in \U$ such that
$$
\begin{aligned}
&\mu(\chi_{A^\varphi})\cdot\mu_{A^\varphi}(\varphi|A^\varphi-\varphi(*))=
\sup_{A\in\U}\mu(\chi_A)\cdot \mu_A(\varphi|A-\varphi(*)),\\
&\mu(\chi_{A_\varphi})\cdot\mu_{A_\varphi}(\varphi|A_\varphi-\varphi(*))=
\inf_{A\in\U}\mu(\chi_A)\cdot\mu_A(\varphi|A-\varphi(*)),\\
&\mu(\chi_{A^\psi})\cdot\mu_{A^\psi}(\psi|A^\psi-\psi(*))=
\sup_{A\in\U}\mu(\chi_A)\cdot\mu_A(\psi|A-\psi(*)),\\
&\mu(\chi_{A_\psi})\cdot\mu_{A_\psi}(\psi|A_\psi-\psi(*))=
\inf_{A\in\U}\mu(\chi_A)\cdot\mu_A(\psi|A-\psi(*)).
\end{aligned}
$$
The weak additivity and the $(2^{s}-1)$-Lipschitz property of the functionals $\mu_A$, $A\in\A$, imply:
$$
\begin{aligned}
p_X(\mu)(\varphi)-p_X(\mu)(\psi)&=\varphi(*)+
\mu(\chi_{A^\varphi})\cdot\mu_{A^\varphi}(\varphi|A^\varphi-\varphi(*))+
\mu(\chi_{A_\varphi})\cdot\mu_{A_\varphi}(\varphi|A_\varphi-\varphi(*))-\\
&\;-\psi(*)-\mu(\chi_{A^\psi})\cdot\mu_{A^\psi}(\psi|A^\psi-\psi(*))-
\mu(\chi_{A_\psi})\cdot\mu_{A_\psi}(\psi|A_\psi-\psi(*))\le\\
&\le\varphi(*)+\mu(\chi_{A^\varphi})\cdot\mu_{A^\varphi}(\varphi|A^\varphi-\varphi(*))+
\mu(\chi_{A_\psi})\cdot\mu_{A_\psi}(\varphi|A_\psi-\varphi(*))-\\
&\;-\psi(*)-\mu(\chi_{A^\varphi})\cdot\mu_{A^\varphi}(\psi|A^\varphi-\psi(*))-
\mu(\chi_{A_\psi})\cdot\mu_{A_\psi}(\psi|A_\psi-\psi(*))=\\
&=(1-\mu(\chi_{A^\varphi})-\mu(\chi_{A_\psi}))(\varphi(*)-\psi(*))+\\
&\;+\mu(\chi_{A^\varphi})\cdot(\mu_{A^\varphi}(\varphi|A^\varphi)-\mu_{A^\varphi}(\psi|A^\varphi))+\\
&\;+\mu(\chi_{A_\psi})\cdot(\mu_{A_\psi}(\varphi|A_\psi)-\mu_{A_\psi}(\psi|A_\psi))\le\\
&\le(1-\mu(\chi_{A^\varphi})-\mu(\chi_{A_\psi}))\|\varphi-\psi\|+
(2^{s}-1)\|\varphi-\psi\|+(2^{s}-1)\|\varphi-\psi\|\le\\
&\le(1+2(2^{s}-1))\|\varphi-\psi\|=(2^{s+1}-1)\|\varphi-\psi\|. \end{aligned}
$$
By analogy we can prove that $p_X(\mu)(\psi)-p_X(\mu)(\varphi)\le (2^{s+1}-1)\|\varphi-\psi\|$, which implies that the functional $p_X(\mu)$ is $(2^{s+1}-1)$-Lipschitz and hence belongs to $\Lip_{2^{s+1}-1}(X)$.

Using the continuity of the maps $p_A$, $A\in\A$, and the fact that for any $\e>0$ the norm $\|\varphi|A-\varphi(*)\|<\e$ for all but finitely many sets $A\in\A$, we can  show that the map $p_X:VX\to\overline{V}(X)$ is continuous.

Finally, we check that $p_X(\mu)=\mu$ if $\mu=\delta_X(x)$ is the Dirac measure at a point $x\in X$. If $x=*$, then $\mu(\chi_A)=\chi_A(*)=0$ and hence $p_X(\mu)(\varphi)=\varphi(*)=\mu(\varphi)$ for any function $\varphi\in C(X)$. So, we assume that $x\in X\setminus \{*\}$. Since $\A$ is a disjoint cover of $X$, there is a unique set $A\in\A$ containing $x$. It follows that $r_A(x)=x$ and hence $\mu_A=p_A\circ Vr_A(\mu)=p_A\circ Vr_A(\delta_X(x))=p_A(\delta_A(x))=\delta_A(x)$.  Take any function $\varphi\in C(X)$. If $\varphi(x)\ge \varphi(*)$, then
$$p_X(\mu)(\varphi)=\varphi(*)+\mu(\chi_A)\cdot\mu_A(\varphi|A-\varphi(*))+0=\varphi(*)+1\cdot\delta_A(x)(\varphi|A-\varphi(*))=
\varphi(*)+(\varphi(x)-\varphi(*))=\varphi(x).$$
If $\varphi(x)\le\varphi(*)$, then
$$p_X(\mu)(\varphi)=\varphi(*)+0+\mu(\chi_A)\cdot\mu_A(\varphi|A-\varphi(*))=
\varphi(*)+1\cdot(\varphi(x)-\varphi(*))=\varphi(x).$$
So, $p_X(\mu)=p_X(\delta_X(x))=\delta_X(x)=\mu$.
\end{proof}

\begin{theorem}\label{main2a} Each hereditarily paracompact scattered compact space $X$ of finite scattered height $s=\sht(X)$ is an absolute $F$-valued retract for the functor $F_{s+1}=V_{\h}\cap V_{\wa}\cap V_{\word}\cap\Lip_{2^{s+2}-1}$.
\end{theorem}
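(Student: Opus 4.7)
The plan is to deduce the theorem from Theorem~\ref{main2} by realizing $X$ as a continuous retract of a hereditarily paracompact scattered compact space $Y$ with $\sht(Y)=s+1$ and $|Y^{(s+1)}|=1$. Set
$$Y=\Bigl(\bigsqcup_{k\in\w}X_k\Bigr)\cup\{\infty\},$$
the one-point compactification of the topological sum of countably many copies $X_k$ of $X$. Applying Theorem~\ref{main2} to $Y$ will give $Y\in\AR[F_{s+1}]$, and then a standard retract/pushout manoeuvre will transfer the property to $X$.

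First I would verify that $Y$ meets the hypotheses of Theorem~\ref{main2}. By the characterization of \cite{BL} recalled before that theorem, $Y$ is hereditarily paracompact and scattered. Since each $X_k$ is clopen in $Y$, an easy induction shows that $Y^{(j)}=\bigsqcup_{k\in\w}X_k^{(j)}\cup\{\infty\}$ for $1\le j\le s$, with $\infty$ remaining a limit point throughout because $X_k^{(j)}\neq\emptyset$ for $j\le s$. At the next step, each point of $\bigsqcup_k X_k^{(s)}$ is isolated in $Y^{(s)}$ (as $X_k^{(s)}$ is finite and clopen in $Y^{(s)}\setminus\{\infty\}$), while $\infty$ is still a limit of the infinite set $\bigsqcup_k X_k^{(s)}$. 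Hence $Y^{(s+1)}=\{\infty\}$ and $\sht(Y)=s+1$, and Theorem~\ref{main2} yields $Y\in\AR[F_{s+1}]$.

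Second, I would build a continuous retraction $r\colon Y\to X$. Identify $X$ with $X_0$, fix a point $x_0\in X$, and set $r|_{X_0}=\id_{X_0}$ and $r(y)=x_0$ for all $y\in Y\setminus X_0$. The clopenness of $X_0$ in $Y$ makes this continuous: for an open set $U\subset X$, $r^{-1}(U)$ is either $U$ (if $x_0\notin U$) or $U\cup(Y\setminus X_0)$ (if $x_0\in U$), and both sets are open in $Y$.

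The final step is to transfer the $F_{s+1}$-valued retract property from $Y$ to $X$. Given any embedding $X\subset Z$ into a compact Hausdorff space, form the pushout $W=Y\sqcup_X Z$ in $\Comp$, obtained from the sum $Y\sqcup Z$ by identifying the two copies of $X$; because $X$ is closed in both $Y$ and $Z$, the quotient $W$ is compact Hausdorff and contains $Y$ and $Z$ as closed subspaces meeting in $X$. Applying $Y\in\AR[F_{s+1}]$ to the inclusion $Y\subset W$ produces a continuous $\sigma\colon W\to F_{s+1}(Y)$ with $\sigma(y)\in F_{s+1}(\{y\})\subset F_{s+1}(Y)$ for every $y\in Y$. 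Then $\rho=F_{s+1}(r)\circ\sigma|_Z\colon Z\to F_{s+1}(X)$ is the required map: for any $x\in X\subset Z$, the equality $r(x)=x$ and functoriality of $F_{s+1}$ give $\rho(x)\in F_{s+1}(r)\bigl(F_{s+1}(\{x\})\bigr)\subset F_{s+1}(\{x\})\subset F_{s+1}(X)$. The main obstacle is the scattered-height computation for $Y$ together with confirming that the pushout $W$ inherits Hausdorffness and closed-subspace embeddings of $Y$ and $Z$; both become routine once the clopen structure of the $X_k$ in $Y$ is exploited.
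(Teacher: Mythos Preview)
Your proof is correct and follows essentially the same route as the paper: the space $Y$ you construct is exactly the paper's one-point compactification of $X\times\IN$, and both proofs then invoke Theorem~\ref{main2} and pass to the retract $X\cong X_0\subset Y$. Your version is simply more explicit, supplying the scattered-height computation and the pushout argument for retract-stability of $\AR[F_{s+1}]$ that the paper leaves implicit in the phrase ``being homeomorphic to a retract of $Y$.''
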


\begin{proof} Consider the one-point compactification $Y$ of the product $X\times\IN$ of $X$ with the countable discrete space $\IN$ and observe that $\sht(Y)=\sht(X)+1=s+1$ and $Y^{(s+1)}$ is a singleton. By Theorem~\ref{main2}, the space $Y$ is an absolute $F$-valued retract for the functor $F_{s+1}=V_{\h}\cap V_{\wa}\cap V_{\word}\cap\Lip_{2^{s+2}-1}$ and so is the space $X$, being homeomorphic to a retract of $Y$.
\end{proof}

It is known \cite[1.7]{SCH} that openly generated compacta have countable cellularity. This implies that any uncountable compact space $X$ with a unique non-isolated point is not openly generated. On the other hand, it is an absolute $F$-valued retract for the functor $F=V_\h\cap V_\wa\cap V_\word\cap \Lip_3$ according to Theorem~\ref{main2}. Let us write this fact for future references.

\begin{corollary} Any uncountable compact space $X$ with a unique non-isolated point is not openly generated but is an absolute $F$-valued retract for the functor $F=V_\h\cap V_\wa\cap V_\word\cap \Lip_3$. For this functor we get $\AR[F]\not\subset\OG$.
\end{corollary}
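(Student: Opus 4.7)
The plan is to derive this corollary as a direct consequence of Theorem~\ref{main2} combined with the classical bound on cellularity of openly generated compacta. The argument splits cleanly into two short steps.

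First I would verify that $X$ meets the hypotheses of Theorem~\ref{main2} with scattered height $s=1$. Let $*$ denote the unique non-isolated point of $X$. Any non-empty $A\subseteq X$ either meets the discrete open subspace $X\setminus\{*\}$ (and thereby contains a point isolated in $A$) or equals $\{*\}$, so $X$ is scattered; moreover every point of $X\setminus\{*\}$ is isolated in $X$, so $X^{(1)}=\{*\}$, giving $\sht(X)=1$ and $|X^{(s)}|=1$. The space $X$ is homeomorphic to the one-point compactification of the discrete space $X\setminus\{*\}$, so it lies in the class of hereditarily paracompact compacta characterized in~\cite{BL}; alternatively, any open cover of a subspace $Y\subseteq X$ admits the locally finite refinement consisting of one member containing $*$ (when $*\in Y$) together with the open singletons of the remaining points of $Y$. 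Plugging $s=1$ into Theorem~\ref{main2} yields $2^{s+1}-1=3$, so $X\in\AR[F_1]$ where $F_1=V_{\h}\cap V_{\wa}\cap V_{\word}\cap\Lip_3=F$.

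Next I would observe that $X\setminus\{*\}$ is an uncountable discrete open subspace of $X$, so the family $\{\{x\}:x\in X\setminus\{*\}\}$ is an uncountable pairwise disjoint family of non-empty open subsets of $X$. Consequently the cellularity $c(X)$ is uncountable, and by \cite[1.7]{SCH} every openly generated compactum has countable cellularity, so $X\notin\OG$. The assertion $\AR[F]\not\subset\OG$ is then an immediate consequence of the two preceding observations.

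There is no essential obstacle here: the substantive work has already been carried out in Theorem~\ref{main2}, and all that remains is to notice that the simplest non-trivial scattered height $s=1$ produces precisely the exponent $\Lip_3$ and to invoke the standard cellularity bound for openly generated compacta.
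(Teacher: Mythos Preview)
Your proposal is correct and follows exactly the route sketched in the paper: apply Theorem~\ref{main2} with $s=\sht(X)=1$ (so that $2^{s+1}-1=3$) and then invoke the countable-cellularity bound \cite[1.7]{SCH} for openly generated compacta. Your write-up simply makes explicit the verification of the hypotheses (scatteredness, $|X^{(1)}|=1$, hereditary paracompactness) that the paper leaves implicit; one small caution: in your direct paracompactness argument the ``remaining points'' must mean those outside the chosen member containing~$*$ (a finite set), not all of $Y\setminus\{*\}$, since otherwise the proposed refinement fails to be locally finite at~$*$.
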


Theorems~\ref{main1}, \ref{main2} and Proposition~\ref{natural} have an interesting corollary.

\begin{corollary} The functor $V_\h\cap V_\wa\cap V_{\word}\cap\Lip_3$ admits no natural transformation into the functor $\Lip_k$ for any $k\in[1,2)$.
\end{corollary}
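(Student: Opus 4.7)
My plan is to derive the non-existence of the natural transformation by contradiction, chaining together three results already established in this section. Suppose, toward a contradiction, that some $k\in[1,2)$ admits a natural transformation $\eta:F\to\Lip_k$, where $F=V_\h\cap V_\wa\cap V_{\word}\cap\Lip_3$.

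First I would verify that $\AR[\Lip_k]\subset\OG$ for every $k\in[1,2)$, by applying Theorem~\ref{main1} to $\Lip_k$. The verification is straightforward: for any compact $X$, any $\mu\in\Lip_k X$, and any pair $f,g\in C(X)$ with $f\cdot g=0$ and $\|f-g\|\le 1$, the $k$-Lipschitz property gives
$$|\mu(f)-\mu(g)|\le k\cdot\|f-g\|\le k<2,$$
so the supremum appearing in the statement of Theorem~\ref{main1} is bounded by $k<2$, and the containment hypothesis on $\Lip_k X$ is satisfied. Theorem~\ref{main1} then yields $\AR[\Lip_k]\subset\OG$.

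Next I would invoke Proposition~\ref{natural}: the hypothetical natural transformation $\eta:F\to\Lip_k$ would force $\AR[F]\subset\AR[\Lip_k]$. Combining this with the first step gives $\AR[F]\subset\OG$.

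Finally, I would reach the contradiction by citing the immediately preceding Corollary, which provides an uncountable compact space with a unique non-isolated point lying in $\AR[F]$ but not in $\OG$, witnessing $\AR[F]\not\subset\OG$. The main ``obstacle'' is really just the short Lipschitz estimate needed to feed $\Lip_k$ into Theorem~\ref{main1}; the rest is a mechanical three-line application of Proposition~\ref{natural} and the previous Corollary.
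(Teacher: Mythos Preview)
Your proposal is correct and follows essentially the same route as the paper, which simply cites Theorems~\ref{main1}, \ref{main2} and Proposition~\ref{natural} as yielding the corollary. Your explicit Lipschitz estimate is exactly the content needed to feed $\Lip_k$ into Theorem~\ref{main1} (this is also Corollary~\ref{c4}), and your appeal to the preceding corollary is just Theorem~\ref{main2} specialized to spaces of scattered height~1.
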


By the same reason, Theorems~\ref{shchep} and \ref{valovS} imply:

\begin{corollary} The functor $S=V_\h\cap V_\wa\cap V_\ord$ admits no natural transformation into a normal functor $F:\Comp\to\Comp$.
\end{corollary}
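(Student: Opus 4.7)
The plan is to argue by contradiction, chaining together Valov's characterisation of $\AR[S]$ with \v S\v cepin's upper bound on $\AR[F]$ for normal $F$.

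First I would suppose that some natural transformation $\eta\colon S\to F$ exists, where $F\colon\Comp\to\Comp$ is normal, and try to extract a contradiction. Applying Proposition~\ref{natural} to $\eta$ gives the inclusion $\AR[S]\subset\AR[F]$. Theorem~\ref{valovS} (Valov) identifies the left-hand side with the class $\OG$ of openly generated compacta, while Theorem~\ref{shchep} (\v S\v cepin) forces the right-hand side to lie inside $\AbE[0]$, the class of Dugundji compacta. Composing the two inclusions would yield $\OG\subset\AbE[0]$: every openly generated compactum would have to be a Dugundji compactum.

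To finish I would invoke the classical fact (going back to \v S\v cepin~\cite{SCH}) that the inclusion $\AbE[0]\subset\OG$ is strict — openly generated (equivalently, by Theorem~\ref{shi}, $\kappa$-metrizable) compacta need not be Dugundji. A standard witness is a suitably large hyperspace such as $\exp(\{0,1\}^{\aleph_2})$, which is openly generated (open generation is preserved under the hyperspace functor) but fails to be Dugundji (cf.\ \cite[6.7]{FF}). Any such example contradicts the conclusion $\OG\subset\AbE[0]$ extracted in the previous paragraph, and hence no natural transformation $S\to F$ into a normal functor $F$ can exist.

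The only nontrivial step is the last one: producing or citing an openly generated non-Dugundji compactum. All the other ingredients are just a combination of three results already recorded earlier in the paper — Proposition~\ref{natural}, Theorem~\ref{valovS} and Theorem~\ref{shchep} — and no additional computation is required.
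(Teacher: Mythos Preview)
Your argument is correct and matches the paper's own reasoning: the corollary is stated there without a separate proof, merely as a consequence of Proposition~\ref{natural}, Theorem~\ref{valovS} and Theorem~\ref{shchep}, exactly the chain you reproduce. The only thing you add beyond the paper is an explicit witness for the strict inclusion $\AbE[0]\subsetneq\OG$ (the paper leaves this implicit), and your choice of a large hyperspace over a Cantor cube is a standard and appropriate example.
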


It is interesting to note that the functor $F=V_\h\cap V_{\wa}\cap V_{\word}$ has maximal possible class $\AR[F]$ of absolute $F$-valued retracts.

\begin{theorem}\label{all} For the functor $F=V_\h\cap V_{\wa}\cap V_{\word}$ the class $\AR[F]$ coincides with the class of all compact Hausdorff spaces.
\end{theorem}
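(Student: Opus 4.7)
My plan is to produce, for each embedding of $X$ into a compact Hausdorff space $Y$, a single ``extension operator'' $E\colon C(X)\to C(Y)$ which commutes with every affine transformation of the target $\IR$, and then dualize it. Concretely, I want $E(\varphi)|X=\varphi$, $E(\varphi)(Y)\subset[\min\varphi,\max\varphi]$, $E(c\cdot 1_X)=c\cdot 1_Y$ for every $c\in\IR$, and the equivariance $E(a\varphi+b\cdot 1_X)=a\cdot E(\varphi)+b\cdot 1_Y$ for all $a,b\in\IR$. Once such $E$ is in hand, the map $r\colon Y\to \IR^{C(X)}$, $r(y)(\varphi):=E(\varphi)(y)$, is continuous in the product topology (each coordinate $y\mapsto E(\varphi)(y)$ is a continuous function on $Y$), satisfies $r(x)=\delta_X(x)\in F(\{x\})$ for every $x\in X$, and each $r(y)$ preserves constants, is weakly additive (from $E(\varphi+c\cdot 1_X)=E(\varphi)+c\cdot 1_Y$), weakly multiplicative (from $E(c\varphi)=c\,E(\varphi)$), and weakly order-preserving (from $E(\varphi)(y)\in[\min\varphi,\max\varphi]$). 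Thus $r(y)\in FX$ for every $y$, and $r$ witnesses $X\in\AR[F]$.

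I construct $E$ as follows. To each non-constant $\varphi\in C(X)$ I attach the canonical normalization $\tilde\varphi:=(\varphi-\min\varphi)/(\max\varphi-\min\varphi)\in C(X,[0,1])$, which attains both $0$ and $1$. Let $\widetilde C(X)$ denote the set of all such normalized non-constant functions. The involution $\iota\colon\tilde\varphi\mapsto 1-\tilde\varphi$ on $\widetilde C(X)$ is fixed-point-free, so by the axiom of choice I pick one representative from each orbit $\{\tilde\varphi,1-\tilde\varphi\}$. Since $X$ is closed in the normal space $Y$, the Tietze--Urysohn theorem supplies a continuous extension $\overline{\tilde\varphi}\colon Y\to[0,1]$ of the chosen representative; I extend to the other orbit element by declaring $\overline{1-\tilde\varphi}:=1-\overline{\tilde\varphi}$. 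For a general non-constant $\varphi$, write $\varphi=a\tilde\varphi+b$ with $a=\max\varphi-\min\varphi>0$ and $b=\min\varphi$, and put $E(\varphi):=a\overline{\tilde\varphi}+b$. For a constant $\varphi=c\cdot 1_X$ put $E(\varphi):=c\cdot 1_Y$.

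The core computation is the full affine equivariance of $E$. The case $a=0$ reduces to the convention on constants; for $a>0$ one has $\widetilde{a\varphi+b}=\tilde\varphi$ and the new stretch and shift combine to give exactly $a\,E(\varphi)+b$; for $a<0$ the point is that $\widetilde{a\varphi+b}=1-\tilde\varphi$, and the built-in compatibility $\overline{1-\tilde\varphi}=1-\overline{\tilde\varphi}$ is precisely what makes $E(a\varphi+b)=a\,E(\varphi)+b$ come out correctly. The main obstacle in the argument is engineering this compatibility under the sign-flip involution; once the pairing is in place, the four functional-analytic properties of $r(y)$ are one-line deductions, and $r(x)=\delta_X(x)\in F(\{x\})$ is immediate from $E(\varphi)|X=\varphi$.
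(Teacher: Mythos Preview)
Your argument is correct. The affine equivariance $E(a\varphi+b)=aE(\varphi)+b$ is exactly what is needed: it yields weak additivity ($a=1$), weak multiplicativity (general $a$, $b=0$), and together with the range condition $E(\varphi)(Y)\subset[\min\varphi,\max\varphi]$ and the restriction property $E(\varphi)|X=\varphi$ you get $r(y)\in FX$ and $r(x)=\delta_X(x)\in F(\{x\})$. Continuity of $r$ into the product topology is, as you say, coordinate-wise and hence immediate.

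Your route differs from the paper's. The paper does not build an extension operator $C(X)\to C(Y)$; instead it observes that $V_{\word}X=\prod_{\varphi}[\min\varphi,\max\varphi]$ is an absolute retract and then constructs a retraction $r_X\colon V_{\word}X\to FX$ by the anti-symmetrization formula
\[
r_X(\mu)(\varphi)=\alpha(\varphi)+\tfrac12\|\varphi-\alpha(\varphi)\|\Bigl(\mu\bigl(\tfrac{\varphi-\alpha(\varphi)}{\|\varphi-\alpha(\varphi)\|}\bigr)-\mu\bigl(-\tfrac{\varphi-\alpha(\varphi)}{\|\varphi-\alpha(\varphi)\|}\bigr)\Bigr),
\]
with $\alpha(\varphi)=\tfrac12(\min\varphi+\max\varphi)$. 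The common idea is the same: normalize $\varphi$ and then enforce compatibility under the sign-flip involution (for the paper, $\psi\leftrightarrow-\psi$ on functions normalized to $[-1,1]$; for you, $\tilde\varphi\leftrightarrow 1-\tilde\varphi$ on functions normalized to $[0,1]$). The paper achieves compatibility by averaging $\mu(\psi)$ against $-\mu(-\psi)$; you achieve it by choosing, once and for all, a Tietze extension on one representative of each orbit and defining the other by $\overline{1-\tilde\varphi}=1-\overline{\tilde\varphi}$. Your argument is slightly more elementary (it uses only Tietze--Urysohn and a choice of representatives, and you never need to verify that an auxiliary map is a retraction on all of $FX$). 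The paper's argument, in exchange, yields the extra structural statement that $FX$ is a retract of the Tychonoff cube $V_{\word}X$, hence is itself an absolute retract, which is a bit more than the theorem asks for.
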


\begin{proof} Observe that for every compact space $X$ the space $V_\word X=VX=\prod_{\varphi\in C(X)}[\min\varphi,\max\varphi]$ is an absolute retract, which implies that the class $\AR[V]$ contains all compact spaces. To prove that $\AR[F]=\AR[V]$ it suffices to construct a retraction $r_X:VX\to FX$.

Consider the functional $\alpha\in FX$ assigning to each function $\varphi\in C(X)$ the real number $\alpha(\varphi)=\frac12(\min \varphi+\max\varphi)$. Define the retraction $r_X:VX\to FX$ assigning to each  functional $\mu\in V_\word(X)$ the functional $p_X(\mu)$ assigning to each non-constant function $\varphi\in C(X)$ the real number
$$\alpha(\varphi)+\frac12\|\varphi-\alpha(\varphi)\|\cdot\bigg(\mu\Big(\frac{\varphi-\alpha(\varphi)}
{\|\varphi-\alpha(\varphi)\|}\Big)-
\mu\Big(-\frac{\varphi-\alpha(\varphi)}{\|\varphi-\alpha(\varphi)\|}\Big)\bigg).$$
It can be shown that $r_X(\mu)\in FX$ and the map $r_X:VX\to FX$ is a well-defined retraction of $VX$ onto $FX$.
\end{proof}

\section{Some Open Problems}

\begin{problem} Is the functor $F=V_\h\cap V_\wa\cap V_{\word}$ a (natural) retract of the functor $V_\word$?
\end{problem}

It seems that the retractions $r_X:VX\to FX$ constructed in the proof of Theorem~\ref{all} do not determine a natural transformation $r:V\to F$.

\subsection{Absolute $\Lip_k$-valued retracts}

\begin{problem} Is any compact space $X$ with a unique non-isolated point an absolute $\Lip_2$-valued retract?
\end{problem}

\begin{problem} Is each (scattered) compact space an absolute $\Lip_k$-valued retract for some $k$?
\end{problem}

The answer to this problem is unknown even for the Mr\'owka space $\psi_\A(\IN)$ generated by an uncountable almost disjoint family $\A$ of infinite subsets of $\IN$. By definition, $\psi_\A(\IN)$ is the Stone space of the Boolean algebra generated by $\A\cup\{\{n\}:n\in\IN\}$. In other words,
$\psi_\A(\IN)$ is the one-point compactification of the locally compact space $\IN\cup\A$ in which all points $n\in\IN$ are isolated and for any $A\in\A$ the family
$$\mathcal B_A=\big\{\{A\}\cup A\setminus F:\mbox{$F$ is a finite subset of $\IN$}\}$$
is a neighborhood base at $A$.
The Mr\'owka space has scattered height $\sht(\psi_\A(\IN))=2$ but is not hereditarily paracompact.
It is separable but contains an uncountable discrete subspace.
If the almost disjoint family $\A$ is maximal, then $\psi_\A(\IN)$ is sequential but not Fr\'echet-Urysohn.

\begin{problem} Is the Mr\'owka space $\psi_\A(\IN)$ an absolute $\Lip_k$-valued retract for some real $k\ge 1$?
\end{problem}

We say that a functor $F:\Comp\to\Comp$ is {\em weight-preserving} if for any infinite compact space $X$ the weight of the space $FX$ coincides with the weight of $X$.

\begin{problem} Can each compact space be absolute $F$-valued retract for some weight-preserving subfunctor $F\subset \overline{V}$? Is is true for the functor $\Lip_3$?
\end{problem}

\begin{problem} Is the Stone-\v Cech compactification $\beta\IN$ of positive integers an absolute $\Lip_3$-valued retract? Is the remainder $\beta\IN\setminus\IN$ of $\beta\IN$ an absolute $\Lip_3$-valued retract?
\end{problem}

\subsection{Absolute $F$-valued retracts for functors with finite supports}

A functor $F:\Comp\to\Comp$ is defined to have {\em finite supports} if for each compact space $X$ and an element $a\in FX$ there is a map $f:A\to X$ for a finite space $A$ such that $a\in Ff(FA)\subset FX$.

\begin{problem} Can a functor $F:\Comp\to\Comp$ with finite supports have $\AR[F]\supset \AbE[0]$?
\end{problem}

For a functor $F:\Comp\to\Comp$ and a natural number $n=\{0,\dots,n-1\}$ let $F_n$ be a subfunctor of $F$ assigning to each compact space $X$ the subspace
$$F_nX=\{a\in FX:\exists f\in C(n,X)\mbox{ such that } a\in Ff(Fn)\}.$$

\begin{problem} Given a functor $F:\Comp\to\Comp$ and a natural number $n$ study the class $\AR[F_n]$.
\end{problem}

The class of absolute $P_n$-valued retracts has been studied in \cite{Cat98} where it was observed that each $n$-dimensional compact metrizable space is an absolute $P_{n+2}$-valued retract and was proved that each $n$-dimensional hereditarily indecomposable compact space is not an absolute $P_n$-valued retract.

\subsection{Openly generated compacta and the superextension functor}

The following problem was motivated by Ivanov's Theorem~\ref{ivanov}.

\begin{problem} Let $F:\Comp\to\Comp$ be a monadic functional functor such that each connected openly generated compact Hausdorff space is an absolute $F$-valued retract. Is there a natural transformation $\lambda\to F$ from the functor of superextension $\lambda$ into $F$?
\end{problem}
\smallskip

\end{document}